\documentclass[10pt,reqno]{amsart}

\usepackage{amsmath, amssymb, amsthm}
\usepackage{geometry}
\newtheorem{theorem}{Theorem}
\newtheorem{lemma}[theorem]{Lemma}
\newtheorem{proposition}[theorem]{Proposition}

\theoremstyle{definition}

\begin{document}
	\title[Asymptotics of Second order Linear ODEs]
	{Characterisation of Convergence, Boundedness and Unboundedness in Solutions of Second Order Linear Differential Equations}
	
	\author{John A. D. Appleby}
	\address{School of Mathematical
		Sciences, Dublin City University, Glasnevin, Dublin 9, Ireland}
	\email{john.appleby@dcu.ie} 
	
	\author{Subham Pal}
	\address{School of Mathematical
		Sciences, Dublin City University, Glasnevin, Dublin 9, Ireland}
	\email{subham.pal2@mail.dcu.ie}
	
	\subjclass{34D05; 34D20; 93D20; 93D09}
	\keywords{second order linear differential equation, asymptotic stability, boundedness, unboundedness, forcing function}
	\date{7 March 2026}
	
	\begin{abstract}
		This paper develops a characterisation of when solutions of forced second order linear differential equations converge to the zero solution of the asymptotically stable and unforced second order equation, or when the solution is bounded, but not convergent, or is unbounded. We see thereby that forcing terms can exhibit unbounded and high--frequency oscillation, and yet the solution may still tend to zero, even though the first and second derivative may become unbounded.  
	\end{abstract}
	
	\maketitle
	
	\section{Introduction}
	
	This paper answers a question in the asymptotic theory of differential equations, which, although elementary, does not appear to have a clearcut resolution in the literature. The set--up is the following: suppose we have the forced second--order linear autonomous differential equation 
	\begin{equation} \label{eq.x}
		x''(t) + ax'(t) + bx(t) = f(t), \quad t\geq 0,
	\end{equation}
	where $a$ and $b$ are real constants, and the forcing function is to be locally integrable and measurable. Granted initial conditions $x(0)=\xi_0$ and $x'(0)=\xi_1$, there is a unique continuous solution to this equation, defined on $[0,\infty)$. Furthermore, this function is also in twice differentiable.  
	
	The following question naturally arises, especially in applications. Suppose that the underlying unperturbed equation 
	\begin{equation} \label{eq.u}
		u''(t)+au'(t)+bu(t)=0, \quad t\geq 0	
	\end{equation}
	with initial conditions $u(0)=\xi_0$ and $u'(0)=\xi_1$ is such that 
	$u(t;\xi_0,\xi_1)\to 0$ as $t \to\infty$ for any choice of initial conditions. It is well--known that this is characterised by the parametric conditions $a>0$, $b>0$, and that these conditions imply that $u$ and its derivatives decay exponentially fast. Specifically, there exist $\xi_0$-- and $\xi_1$--independent constants  $\alpha>0$ and $C>0$ such that 
	\[
	|u^{(j)}(t,\xi_0,\xi_1)|\leq C(|\xi_0|+|\xi_1|)e^{-\alpha t}, \quad t\geq 0, \quad j=0,1,2.
	\] 
	The question now naturally presents itself: what are good conditions on the forcing function $f$, so that $x$, the solution of the perturbed equation \eqref{eq.x}, also obeys $x(t)$ as $t\to\infty$? In applications, (especially in mechanics), as well as in the general theory, we may also ask what $f$ may need to satisfy so that the state $x(t)$ and its velocity $x'(t)$ both tend to zero as $t\to\infty$, since both $u(t)$ and $u'(t)$ tend to zero as $t\to\infty$. 
	
	What is very--well understood is that the condition $f(t)\to 0$ as $t\to\infty$ is \textit{sufficient} to ensure that $x(t)\to 0$ as $t\to\infty$. By writing down a variation of constants representation for $x$, however, we can see that this is not necessary, since certain irregular forcing functions do not obey $f(t)\to 0$ as $t\to\infty$, but do satisfy the condition $f\in L^1(0,\infty)$, and this latter condition is sufficient to ensure $x(t)\to 0$. Furthermore, the condition $f(t)\to 0$ as $t\to\infty$ is sufficient to ensure that $x(t)\to 0$ and $x'(t)\to 0$ as $t\to\infty$, but once again, it may not be a necessary condition. 
	
	In this paper, we characterise minimal conditions on $f$, which are independent of the structural parameters $a$ and $b$, and the initial conditions $\xi_0$ and $\xi_1$ which characterise the asymptotic convergence of $x$ to zero, and the convergence of $x$ and $x'$ to zero. We also give theorems which characterise when solutions of the differential equation tend to zero, are bounded but non--convergent, or unbounded. The results are inspired by recent work of the first author and Lawless, papers of Strauss and Yorke, and analysis for Volterra differential equations by Gripenberg, Londen and Staffans. To give a flavour of the results, we state a stability result now. Introduce the function $y_2$, given by 
	\begin{equation} \label{eq.y2}
		y_2(t)=\int_0^t (t-s)e^{-(t-s)}f(s)\,ds, \quad t\geq 0,
	\end{equation}
	and notice that it is independent of $\xi_0$, $\xi_1$ and the parameters $a$ and $b$. We also introduce the two--parameter functional of $f$ given by 
	\begin{equation} \label{eq.Ftheta1theta2}
		F_{(\theta_1,\theta_2)}(t)=\int_{(t-\theta_1)^+}^t \int_{(s-\theta_2)^+}^s f(u)\,du \,ds, \quad t\geq 0, \quad \theta_1,\theta_2 \in [0,1].
	\end{equation}
	\begin{theorem} \label{thm.xxprxprprfintint} 
		Let $f\in L^1_{loc}([0,\infty);\mathbb{R})$ and let $x$ be the unique continuous solution of the perturbed differential equation \eqref{eq.x}. Suppose moreover that $a>0$, $b>0$. Let $F_{(\theta_1,\theta_2)}$ be defined by \eqref{eq.Ftheta1theta2} and $y_2$ by \eqref{eq.y2}. Then the following are equivalent:
		\begin{itemize}
			\item[(i)] For every $(\xi_0,\xi_1)\in \mathbb{R}^2$, the solution $x(\cdot;\xi_0,\xi_1)$ of \eqref{eq.x} obeys $x(t;\xi_0,\xi_1)\to 0$ as $t\to\infty$;
			\item[(ii)] $y_2(t)\to 0$ as $t\to\infty$; 
			\item[(iii)] $F_{(\theta_1,\theta_2)}(t)\to 0$ as $t\to\infty$ uniformly in $(\theta_1,\theta_2)\in [0,1]^2$;
			\item[(iv)]  $F_{(\theta_1,\theta_2)}(t)\to 0$ as $t\to\infty$ for each  $(\theta_1,\theta_2)\in [0,1]^2$.
		\end{itemize}
	\end{theorem}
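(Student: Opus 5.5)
The plan is to reduce everything to the particular solution driven by $f$, and then to compare convolutions of $f$ against different kernels. Let $h$ be the fundamental solution of \eqref{eq.u}, i.e.\ $h''+ah'+bh=0$ with $h(0)=0$, $h'(0)=1$. Then $x(t;\xi_0,\xi_1)=u(t;\xi_0,\xi_1)+x_f(t)$ with $x_f=h*f$. Since $a,b>0$, both $h$ and $h'$ decay exponentially and $u\to 0$, so (i) is equivalent to $x_f(t)\to 0$. Throughout, extend $f$ by zero to $(-\infty,0)$. With this convention $y_2=e*f$ with $e(t)=te^{-t}$, and $F_{(\theta_1,\theta_2)}=k_{\theta_1\theta_2}*f$ with $k_{\theta_1\theta_2}=1_{[0,\theta_1]}*1_{[0,\theta_2]}$; this is exactly what the truncations $(\cdot)^+$ in \eqref{eq.Ftheta1theta2} encode.

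\emph{(i)$\Leftrightarrow$(ii).} Note that $y_2$ solves $y''+2y'+y=f$ with zero initial data. Hence $w=x_f-y_2$ satisfies $w''+aw'+bw=(a-2)y_2'+(b-1)y_2$ with $w(0)=w'(0)=0$. Therefore
\[
w=(a-2)\,h*y_2'+(b-1)\,h*y_2=(a-2)\,h'*y_2+(b-1)\,h*y_2,
\]
where the integration by parts is justified by $h(0)=0$ and $y_2(0)=0$. Since $h,h'\in L^1$, it follows that $y_2\to0$ implies $w\to0$, and hence $x_f\to0$. The converse is symmetric: $y_2-x_f$ solves $z''+2z'+z=(2-a)x_f'+(1-b)x_f$, and convolving with $e$ and integrating by parts via $e(0)=0$ moves the derivative onto $e'\in L^1$.

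\emph{(ii)$\Rightarrow$(iii)$\Rightarrow$(iv).} In the distributional sense $f=y_2''+2y_2'+y_2$, so
\[
k*f=k''*y_2+2k'*y_2+k*y_2 .
\]
Here $k''=\delta_0-\delta_{\theta_1}-\delta_{\theta_2}+\delta_{\theta_1+\theta_2}$, and $|k'|\le1$, $0\le k\le1$, all supported in $[0,2]$. This gives $|F_{(\theta_1,\theta_2)}(t)|\le C\sup_{[t-2,t]}|y_2|$ with $C$ independent of $(\theta_1,\theta_2)\in[0,1]^2$, which proves (iii). The implication (iii)$\Rightarrow$(iv) is trivial.

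\emph{(iv)$\Rightarrow$(iii)$\Rightarrow$(ii).} For (iii)$\Rightarrow$(ii), write $e=e^{-\cdot}*e^{-\cdot}$ and use $e^{-t}=\int_0^\infty e^{-\theta}1_{[0,\theta]}(t)\,d\theta$. This gives
\[
y_2(t)=\int_0^\infty\!\!\int_0^\infty e^{-\theta_1-\theta_2}\,(k_{\theta_1\theta_2}*f)(t)\,d\theta_1\,d\theta_2 .
\]
For $\theta_i>1$, split $1_{[0,\theta_i]}$ into at most $\lceil\theta_i\rceil$ translates of windows of length at most $1$. Then $|(k_{\theta_1\theta_2}*f)(t)|\le\lceil\theta_1\rceil\lceil\theta_2\rceil\sup\{|F_{(\phi_1,\phi_2)}(t-\tau)|:\phi\in[0,1]^2,\ 0\le\tau\le\theta_1+\theta_2\}$. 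Uniform convergence in (iii), together with local boundedness of $F$ (which follows from $f\in L^1_{loc}$), gives a uniform bound on $\sup_{t,\phi}|F_{(\phi_1,\phi_2)}(t)|$. For each fixed $(\theta_1,\theta_2)$ the integrand tends to $0$ as $t\to\infty$. Dominated convergence with the weight $e^{-\theta_1-\theta_2}(1+\theta_1)(1+\theta_2)$ then yields $y_2\to0$.

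The main obstacle is upgrading the pointwise convergence in (iv) to the uniform convergence in (iii). I would first fix $\theta_2$ and set $G(t)=\int_{t-\theta_2}^t f$ and $H(t)=\int_0^t G$. Then (iv) says $H(t)-H(t-\theta_1)\to0$ for every $\theta_1\in[0,1]$. Since $H$ is continuous, the additive form of the uniform convergence theorem (Karamata / Csisz\'ar--Erd\H{o}s, via an Egorov-plus-Steinhaus-type argument) gives convergence uniformly in $\theta_1\in[0,1]$. Next I would obtain joint uniformity in $(\theta_1,\theta_2)$ by noting that $F$ is jointly continuous in $(\theta_1,\theta_2)$, Lipschitz in each variable with local constant controlled by $\int_{t-2}^t|f|$, and then running the same Baire/Egorov argument on $[0,1]^2$. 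Alternatively, I would apply the one-variable theorem in $\theta_2$ to the continuous function $s\mapsto\int_{t-\theta_1}^{t}\int_0^{s}f$, using the symmetry $k_{\theta_1\theta_2}=k_{\theta_2\theta_1}$. This is the step I expect to require the most care.
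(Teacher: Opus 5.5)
Your (i)$\Leftrightarrow$(ii) and (ii)$\Rightarrow$(iii) are the paper's arguments. Since $k''+2k'+k=(2-a)k'+(1-b)k$, your formula for $w$ is exactly \eqref{eq.x0y2}; the right-hand side of your $w$-equation should read $(2-a)y_2'+(1-b)y_2$, a harmless sign slip. Your distributional expansion is Theorem~\ref{thm.Frepy2}.

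Your (iii)$\Rightarrow$(ii) is a different and valid route. Writing $te^{-t}$ as the mixture $\iint e^{-\theta_1-\theta_2}\,1_{[0,\theta_1]}*1_{[0,\theta_2]}\,d\theta_1\,d\theta_2$, cutting long windows into unit ones, and applying dominated convergence replaces Theorem~\ref{thm.xrepF}. It is shorter, but it exploits the special form $te^{-t}=(e^{-\cdot}*e^{-\cdot})(t)$. The paper's identity works for any $C^2$ kernel with $k(0)=0$, $k'(0)=1$, so it also gives (iii)$\Rightarrow$(i) directly, together with the bound on $x_0$ used in the boundedness classification.

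The gap is the passage to \emph{joint} uniformity in (iv)$\Rightarrow$(iii).

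\textbf{What is sound.} Your first step is fine: for fixed $\theta_2$, $F_{(\theta_1,\theta_2)}(t)=H(t)-H(t-\theta_1)$ with $H$ continuous, and the uniform convergence theorem gives uniformity in $\theta_1$.

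\textbf{Why neither option finishes.} The symmetry alternative only yields uniformity in each variable separately, which does not by itself give $\sup_{\Theta\in[0,1]^2}|F_\Theta(t)|\to0$. The other option, ``the same Baire argument on $[0,1]^2$'', is the paper's strategy without its key ingredient. Baire only produces a box on which $|F_\Theta(t)|\le\varepsilon$ for $t\ge m_0$. Transporting this to the whole square needs the shift identity $F_{\Theta+he_i}(t)=F_\Theta(t)+F_{\Theta'}(t-\theta_i)$, where $\Theta'$ equals $\Theta$ except that its $i$-th entry is $h$. This is the paper's Decomposition Lemma; in your notation it is simply $1_{[0,\theta+h]}=1_{[0,\theta]}+\delta_\theta*1_{[0,h]}$. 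Your Lipschitz bound cannot substitute for it: its constant $\int_{t-2}^t|f|$ may diverge as $t\to\infty$ even under (ii), as it does for the forcing of Lemma~\ref{lemma.osc}. It only gives closedness of the Baire sets.

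\textbf{How to close it.} With the identity in hand your approach finishes quickly. Set $\Phi(\theta_2,t)=\sup_{\theta_1\in[0,1]}|F_{(\theta_1,\theta_2)}(t)|$. It is continuous in $\theta_2$ for fixed $t$, and by your first step it tends to $0$ for each $\theta_2$. It also satisfies $\Phi(\phi,t-\alpha)\le\Phi(\alpha+\phi,t)+\Phi(\alpha,t)$ and $\Phi(\theta_2+h,t)\le\Phi(\theta_2,t)+\Phi(h,t-\theta_2)$. That is all the one-dimensional Baire argument in $\theta_2$ requires: an interval $[\alpha,\beta]$, transport to $[0,\beta-\alpha]$ with bound $2\varepsilon$, then telescoping. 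Equivalently, apply the uniform convergence theorem (whose Baire proof works in any normed space) to the continuous $C([0,1])$-valued map $s\mapsto\bigl(\theta_1\mapsto\int_{s-\theta_1}^{s}\int_0^{u}f(r)\,dr\,du\bigr)$. Its increment over $[t-\theta_2,t]$ is $\theta_1\mapsto F_{(\theta_1,\theta_2)}(t)$.
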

	It can readily be seen that the conditions on $F$ and $y_2$ are independent of the initial conditions and parameters, but simply depend on certain time averages of $f$. In fact, the time averages that appear in $F$ appear unweighted, and independent of the distant past. 
	
	As a taste of how $f$ may be very badly behaved, and yet $x(t)$ still tends to zero as $t\to\infty$, suppose that 
	\[
	f(t)=-4e^{3t}\sin(e^{2t}-1)+10e^t \cos(e^{2t}-1)+2e^{-t}\sin(e^{2t}-1), \quad t\geq 0,
	\] 
	and let $x$ be the solution of $x''(t)+5x'(t)+6x(t)=f(t)$ for $t\geq 0$ with $x(0)=0$, $x'(0)=2$. It is easy to verify that, despite the rapidly growing amplitude of $x(t)$, the exponentially decaying function  $x(t)=e^{-t}\sin(e^{2t}-1)$ for $t\geq 0$ solves the differential equation. On the other hand, the derivatives $x'(t)$ and $x''(t)$ are exponentially unbounded, due to the exponential unboundedness of $f$.   
	
	The idea of characterising asymptotic behaviour in differential systems by averaged conditions of the type used here goes back to 
	Strauss and Yorke \cite{SY67b,SY67a}, and Gripenberg, Londen and Staffans \cite{GLS}. Recently, Appleby and Lawless, in a sequence of papers, have shown how these results can be extended to stochastic systems and systems with memory \cite{AL:2023(AppliedMathLetters)} \cite{AL:2023(AppliedNumMath)}, \cite{AL:2024}, \cite{AL:panto24}.
	
	\section{Representation of Solutions; Characterisation of Convergence of $x$ and $x'$}
	In order to make our presentation precise and self--contained, and to understand how strengthening pointwise conditions on $f$ can give stablity results for the higher derivatives of $x$, we write down representations of the solution of \eqref{eq.x} in terms of functions which are explicitly $f$--independent, or parameter independent. In doing this, we present a result which is self--evident (Theorem~\ref{thm.xf}), as well as one that is partially available in the literature already (Theorem~\ref{thm.xxprintf}). Our rationale for stating and proving them, however, is to show how our main Theorem~\ref{thm.xxprxprprfintint} is a natural development of these results. 
	
	Let $\{e_1,e_2\}$ be the standard basis vectors in $\mathbb{R}^2$, and make the following standard definitions, which enable us to write the solution of the scalar second order equation in terms of a first order system: define
	\[
	X(t)= x(t)e_1+x'(t)e_2, \quad 
	F(t)=f(t)e_2, \quad t\geq 0
	\] 
	and $\xi=\xi_0 e_1+\xi_1 e_2$. Also define the matrix
	\[
	A=\left(\begin{matrix}
		0 & 1 \\
		-b & -a
	\end{matrix}\right),
	\]
	and define the fundamental matrix solution $\Phi(t)\in \mathbb{R}^{2\times 2}$ by 
	\begin{equation} \label{eq.Phi}
	\Phi'(t)= A \Phi(t), \quad t\geq 0; \quad \Phi(0)=I_2,
	\end{equation}
	where $I_2$ is the $2\times 2$ identity matrix. Then we may write 
	\[
	X(t)=\Phi(t)\xi + \int_0^t \Phi(t-s)F(s)\,ds, \quad t\geq 0.
	\] 
	It is useful to have notation for matrix and vector norms. Recall the $1$--norm on $\mathbb{R}^2$, where for  $v=(v_1,v_2)^T$, we have  $\|(v_1,v_2)\|_1:=|v_1|+|v_2|$, and the induced matrix $1$-- norm for $B$ in $\mathbb{R}^{2\times 2}$ is  $\|B\|_1=\max_{\|v\|_1=1}\|Bv\|_1$. This means that $\|Bv\|_1\leq \|B\|_1\|v\|_1$ for all $v\in \mathbb{R}^2$. 
	
	The conditions $a>0$ and $b>0$ ensure that there exist constants $K>0$ and $\alpha>0$ such that $\|\Phi(t)\|_1\leq Ke^{-\alpha t}$ for all $t\geq 0$. Write concretely 
	\[
	\Phi(t)=\left(\begin{matrix}
		\phi_{11}(t) & \phi_{12}(t) \\
		\phi_{21}(t) & \phi_{22}(t)
	\end{matrix}\right),
	\]    
	and write $k(t)=\phi_{12}(t)$. Since $\Phi$ obeys the differential equation \eqref{eq.Phi}, it is easy to show that 
	\begin{equation} \label{eq.k}
		k''(t)+ak'(t)+bk(t)=0, \quad t\geq 0; \quad k(0)=0, \quad k'(0)=1.
	\end{equation}
	Since $a>0$, notice that $k''(0)=-a<0$. 
	Moreover, focusing on the first component of $X$, we see that 
	\[
	x(t)=\phi_{11}(t)\xi_0+\phi_{12}(t)\xi_1+\int_0^t k(t-s)f(s)\,ds, \quad t\geq 0.
	\]
	Since each for each $i,j\in \{1,2\}$, $|\phi_{ij}(t)|\leq Ke^{-\alpha t}$ for all $t\geq 0$, and $k(t)=\phi_{12}(t)$, $k'(t)=\phi_{12}'(t)=\phi_{22}(t)$ and $k''(t)=-ak'(t)-bk(t)$, we see that there is a $C>0$ such that 
	\[
	|k^{(j)}(t)|\leq Ce^{-\alpha t}, \quad t\geq 0, \quad j=0,1,2.
	\]
	Define $x_H(t)=\phi_{11}(t)\xi_0+\phi_{12}(t) \xi_1$ for $t\geq 0$, and notice that 
	\[
	|x_H(t)|\leq K(|\xi_0|+|\xi_1|)e^{-\alpha t}, \quad t\geq 0.
	\]
	Notice that $x_H$ is the unique solution of the initial value problem 
	\begin{equation} \label{eq.xHeqn}
		x_H''(t)+ax_H'(t)+bx_H(t)=0, \quad t\geq 0; \quad x_H(0)=\xi_0, \quad x_H'(0)=\xi_1,
	\end{equation}
	and we have 
	\begin{equation} \label{eq.xrepkastf}
		x(t)=x_H(t)+\int_0^t k(t-s)f(s)\,ds, \quad t\geq 0,
	\end{equation}
	the question as to whether $x(t)$ tends to zero, is bounded but does not tend to zero, or is unbounded, hinges completely on the function $x_0(t)=(k\ast f)(t)$ for $t\geq 0$. Notice that $x_0$ is the solution of the same second order equation as $x$, but with $x_0(0)=0$, $x_0'(0)=0$. 
	
	We introduce one final object. Let $Y(t)=y_0(t)e_1+y_1(t)e_2$ where $Y(0)=0$ and $y_0'(t)=-y_0(t)$ and $y_1'(t)=-y_1(t)+f(t)$ for $t\geq 0$. Then $y_0(t)=0$ for all $t\geq 0$, 
	\begin{equation} \label{eq.y1}
		y_1(t)=\int_0^t e^{-(t-s)}f(s)\,ds, \quad t\geq 0,
	\end{equation}
	and $Y'(t)=-Y(t)+F(t)$ for $t\geq 0$. Define $Z(t)=X(t)-Y(t)$ for $t\geq 0$. Then $Z'(t)=AX(t)+Y(t)$ for $t\geq 0$, or $Z'(t)=AZ(t)+(I+A)Y(t)$ for $t\geq 0$. By variation of constants, we have 
	\begin{equation} \label{eq.Xvoc}
		X(t)=Y(t)+Z(t)=Y(t)+\Phi(t)\xi+\int_0^t \Phi(t-s)(I+A)Y(s)\,ds, \quad t\geq 0.
	\end{equation}
	Since $Y(t)=y_1(t)e_2$, \textit{This formula shows that $x$ and $x'$ can be written in terms of $f$ purely through the functional $y_1$}.
	Therefore, we see that if $Y(t)\to 0$ as $t\to\infty$, then $X(t)\to 0$ as $t\to\infty$. Since $Y(t)=y_1(t)e_2$, and $X(t)=x(t)e_1+x'(t)e_2$, we have shown that $y_1(t)\to 0$ as $t\to\infty$ implies $x(t)\to 0$ and $x'(t)\to 0$ as $t\to\infty$. Conversely, suppose that $x(t)\to 0$ and $x'(t)\to 0$ as $t\to\infty$; writing $\exp_1(t)=e^{-t}$, we see that 
	\[
	y_1=\exp_1\ast f = \exp_1
	\ast (x''+ax'+bx)
	=\exp_1*x''+a\exp_1*x'+b\exp_1*x.
	\]
	The last two terms on the righthand side are convolutions of the integrable function $\exp_1$ and functions which tend to zero as $t\to \infty$, and hence both convolutions tend to zero. 
	As for the first term, using integration by parts, we have 
	\[
	\int_0^t e^{-(t-s)}x''(s)\,ds
	= x'(t)-e^{-t}x'(0) - \int_0^t e^{-(t-s)}x'(s)\,ds,
	\]	
	and all three terms on the righthand side tend to zero as $t\to\infty$. Therefore, we have the representation
	\begin{equation} \label{eq.y1xrep}
		y_1(t)=x'(t)-e^{-t}x'(0) - \int_0^t e^{-(t-s)}x'(s)\,ds+a\int_0^t e^{-(t-s)}x'(s)\,ds + b \int_0^t e^{-(t-s)}x(s)\,ds,
	\end{equation}
	and we see that $x(t)\to 0$ and $x'(t)\to 0$ as $t\to\infty$ implies $y_1(t)\to 0$ as $t\to\infty$. Therefore we have just proven 
	\[
	x(t)\to 0, \, x'(t)\to 0, \quad t\to\infty \quad \Longleftrightarrow \quad y_1(t)\to 0, \quad t\to\infty. 
	\]
	The representations \eqref{eq.Xvoc} and \eqref{eq.y1xrep} show that $x$ and $x'$ can be written purely in terms of $y_1$, and vice versa. Since all convolutions in these formulae are taken with integrable functions ($\exp_1$, or entries of $\Phi$), this suggests that we can prove  $x$ and $x'$ lie in a nice vector space $V$ that is closed with respect to convolution, if and only if $y_1$ is in $V$.
	
	A characterisation of when $y_1(t)\to 0$ as $t\to\infty$ is given in e.g., Appleby and Lawless and tacitly in Gripenberg, Londen and Staffans, building on earlier formulations of Strauss and Yorke. It will also be of importance in the analysis of the second order differential equation. Introduce the one--parameter family of functions 
	\begin{equation} \label{eq.ftheta}
		f_\theta(t):=\int_{(t-\theta)^+}^t f(s)\,ds, \quad t\geq 0, \quad \theta\in [0,1],
	\end{equation}
	and the function 
	\begin{equation} \label{eq.deltaf}
		(\delta f)(t):=f(t)-f_1(t), \quad t\geq 0.
	\end{equation}
	From this, a Fubini argument enables us to show that 
	\begin{equation}
		\int_0^t (\delta f)(s) \, ds = \int_0^1 f_\theta(t) \, d\theta, \quad t\geq 0.
	\end{equation}
	Gripenberg, Londen and Staffans show that if $f_\theta(t)\to 0$ as $t\to\infty$ for each fixed $\theta\in [0,1]$, then there exists a $\delta\in (0,1)$, a $K_1>0$ and a $T_1>0$ such that  
	\[
	\sup_{\theta\in [0,\delta]}|f_\theta(t)|\leq K_1, \quad t\geq T_1,
	\]
	from which it can be shown that 
	\[
	\sup_{\theta\in [0,1]}|f_\theta(t)|\leq K_2, \quad t\geq T_2
	\]
	Thus, the condition $f_\theta(t)\to 0$ as $t\to\infty$ for each $\theta\in [0,1]$ together with this uniform boundedness, implies 
	\[
	F_1(t):=\int_0^t (\delta f)(s)\,ds =\int_0^1 f_\theta(t)\,d\theta \to 0, \quad t\to\infty,
	\]
	by means of the Dominated Convergence Theorem. Finally, using integration by parts, we get:
	\[
	y_1(t)=\int_0^t e^{-(t-s)}f_1(s)\,ds + \int_0^t e^{-(t-s)}(\delta f)(s)\,ds 
	=\int_0^t e^{-(t-s)}f_1(s)\,ds + F_1(t)-\int_0^t e^{-(t-s)}F_1(s)\,ds,
	\]
	and since $F_1(t)\to 0$ as $t\to\infty$, and $f_1(t)\to 0$ by hypotheses, the right hand side tends to zero as $t\to\infty$. Consequently, we have shown that 
	\[
	f_\theta(t)\to 0 \quad t\to\infty \quad \text{ for each $\theta\in [0,1]$} \quad \Longrightarrow \quad y_1(t)\to 0, \quad t\to\infty.
	\] 
	On the other hand, taking $t\geq 1$ and $\theta\in [0,1]$, integrating the differential equation for $y_1$ over $[(t-\theta)_+,t] = [t-\theta,t]$, we see that 
	\begin{equation} \label{eq.fthetarepy1}
	f_\theta(t)=y_1(t)-y_1(t-\theta)+\int_{t-\theta}^t y_1(s)\,ds.
	\end{equation}
	Therefore, if $y_1(t)\to 0$ as $t\to\infty$, it follows that $f_\theta(t)\to 0$ as $t\to\infty$ uniformly for $\theta\in [0,1]$ i.e., 
	\[
	y_1(t)\to 0, \quad t\to\infty \quad  \Longrightarrow \quad  f_\theta(t)\to 0, \quad t\to\infty \text{ uniformly for $\theta\in [0,1]$}.
	\]
	Clearly, the uniform convergence in $\theta$ of $f_\theta(t)$ implies the pointwise convergence. Therefore, we can collect together the above implications  into a result characterising the convergence of the the solution of the second order equation, and its derivative:
	\begin{theorem} \label{thm.xxprintf} 
		Let $f\in L^1_{loc}([0,\infty);\mathbb{R})$ and let $x$ be the unique continuous solution of the perturbed differential equation \eqref{eq.x}. Suppose moreover that $a>0$, $b>0$. Let $f_\theta$ be defined by \eqref{eq.ftheta} and $y_1$ by \eqref{eq.y1}. Then the following are equivalent:
		\begin{itemize}
			\item[(i)] For every $(\xi_0,\xi_1)\in \mathbb{R}^2$, the solution $x(\cdot;\xi_0,\xi_1)$ of \eqref{eq.x} obeys $x(t;\xi_0,\xi_1)\to 0$, $x'(t;\xi_0,\xi_1)\to 0$ as $t\to\infty$;
			\item[(ii)] $y_1(t)\to 0$ as $t\to\infty$; 
			\item[(iii)] $f_\theta (t)\to 0$ as $t\to\infty$ uniformly in $\theta\in [0,1]$;
			\item[(iv)]  $f_\theta(t)\to 0$ as $t\to\infty$ for each  $\theta\in [0,1]$.
		\end{itemize}
	\end{theorem}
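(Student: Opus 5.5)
The plan is to close the cycle (i)$\Rightarrow$(ii)$\Rightarrow$(iii)$\Rightarrow$(iv)$\Rightarrow$(ii)$\Rightarrow$(i), assembling the implications established in the discussion preceding the statement.

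For (ii)$\Rightarrow$(i), use the variation of constants formula \eqref{eq.Xvoc}. Since $Y=y_1e_2$, each of $x$ and $x'$ is $y_1$ (or $0$), plus $\Phi(t)\xi$, plus a convolution of $y_1$ with entries of $\Phi$. Because $\|\Phi(t)\|_1\le Ke^{-\alpha t}$, the term $\Phi(t)\xi$ tends to zero for every $\xi$. The convolution of an integrable kernel with a function tending to zero also tends to zero; the standard argument splits $\int_0^t=\int_0^{t/2}+\int_{t/2}^t$. Hence (i) holds for all $(\xi_0,\xi_1)$.

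For (i)$\Rightarrow$(ii), apply (i) to any single solution, say $\xi=0$, and use the representation \eqref{eq.y1xrep}. This expresses $y_1$ through $x'(t)$, $e^{-t}x'(0)$, and convolutions of $\exp_1$ with $x$ and $x'$, all of which tend to zero. For (ii)$\Rightarrow$(iii), use \eqref{eq.fthetarepy1} for $t\ge1$:
\[
\sup_{\theta\in[0,1]}|f_\theta(t)|\le |y_1(t)|+\sup_{s\in[t-1,t]}|y_1(s)|+\sup_{s\in[t-1,t]}|y_1(s)|,
\]
and the right-hand side tends to zero. The step (iii)$\Rightarrow$(iv) is trivial.

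The substantive step is (iv)$\Rightarrow$(ii). Pointwise convergence $f_\theta(t)\to0$ for each $\theta$ gives no a priori uniform control. The first task is therefore to obtain $\sup_{\theta\in[0,1]}|f_\theta(t)|\le K_2$ for $t\ge T_2$.

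1. Apply a Baire category argument to the continuous functions $\theta\mapsto f_\theta(t)$. The sets $E_N=\{\theta\in[0,1]:|f_\theta(t)|\le N \text{ for all } t\ge N\}$ are closed and cover $[0,1]$, so one of them contains an interval $[\theta_0-\epsilon,\theta_0+\epsilon]$.
2. Use the identity $f_{\theta}(t)-f_{\theta_0}(t)=\pm\int$ over an interval of length $|\theta-\theta_0|$ ending at $t-\min(\theta,\theta_0)$. Shifting in $t$ then converts the bound near $\theta_0$ into a bound for $\theta\in[0,\delta]$; this is the Gripenberg--Londen--Staffans lemma quoted above, which I would cite.
3. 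Extend to all $\theta\in[0,1]$ by additivity: $f_\theta(t)$ is a sum of at most $\lceil1/\delta\rceil$ terms of the form $f_{\theta'}(t-j\delta)$ with $\theta'\le\delta$.

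With the uniform bound in hand, dominated convergence gives $F_1(t)=\int_0^1 f_\theta(t)\,d\theta\to0$. I would then use the decomposition
\[
y_1(t)=e^{-\cdot}\ast f_1(t)+F_1(t)-e^{-\cdot}\ast F_1(t).
\]
Here $f_1(t)\to0$ by hypothesis with $\theta=1$, so every term tends to zero by the same convolution lemma. This closes the cycle.

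I expect the uniform boundedness step to be the main obstacle, since it is the only place where pointwise-in-$\theta$ information must be upgraded.
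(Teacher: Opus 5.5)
Your proposal is correct and follows the paper's argument essentially step for step. The paper uses \eqref{eq.Xvoc} for (ii)$\Rightarrow$(i), \eqref{eq.y1xrep} for (i)$\Rightarrow$(ii) and \eqref{eq.fthetarepy1} for (ii)$\Rightarrow$(iii); for (iv)$\Rightarrow$(ii) it uses a uniform bound on $f_\theta$, dominated convergence for $F_1=\int_0^1 f_\theta\,d\theta$, and the decomposition $y_1=\exp_1\ast f_1+F_1-\exp_1\ast F_1$. The only difference is that you sketch the Baire-category, shift and additivity argument for the uniform bound yourself, whereas the paper attributes the bound on $[0,\delta]$ to Gripenberg--Londen--Staffans and only asserts the extension to $[0,1]$.
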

	We note also that the representations \eqref{eq.Xvoc}, \eqref{eq.y1xrep}, and \eqref{eq.fthetarepy1} enable us to characterise the boundedness of $x$ and $x'$. The proof is similar to that of Theorem~\ref{thm.xxprintf} above, and we leave the proof to the reader, noting that the argument exploits repeatedly the fact that the convolution of an integrable function with a bounded function is bounded. 
		\begin{theorem} \label{thm.xxprintfbdd} 
		Let $f\in L^1_{loc}([0,\infty);\mathbb{R})$ and let $x$ be the unique continuous solution of the perturbed differential equation \eqref{eq.x}. Suppose moreover that $a>0$, $b>0$. Let $f_\theta$ be defined by \eqref{eq.ftheta} and $y_1$ by \eqref{eq.y1}. Then the following are equivalent:
		\begin{itemize}
			\item[(i)] For every $(\xi_0,\xi_1)\in \mathbb{R}^2$, the solution $x(\cdot;\xi_0,\xi_1)$ of \eqref{eq.x} is such that there is $B>0$ such that obeys $|x(t;\xi_0,\xi_1)|\leq B$,   $|x'(t;\xi_0,\xi_1)|\leq B$ for all $t\geq 0$. 
			\item[(ii)] $y_1$ is bounded; 
			\item[(iii)] $t\mapsto f_\theta (t)$ is uniformly bounded in $\theta\in [0,1]$ i.e., there is $B'>0$ such that $|f_\theta(t)|\leq B'$ for all $t\geq 0$ and $\theta\in [0,1]$.
		\end{itemize}
	\end{theorem}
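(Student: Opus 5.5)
We prove the cycle (i)$\Rightarrow$(ii)$\Rightarrow$(iii)$\Rightarrow$(ii)$\Rightarrow$(i). In every step we use the elementary fact that if $g\in L^1(0,\infty)$ and $h$ is bounded on $[0,\infty)$, then $g\ast h$ is bounded by $\|g\|_{L^1}\sup|h|$. We only need this for $g=\exp_1$ and for the entries of $\Phi$.

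\emph{(ii)$\Rightarrow$(i).} Suppose $|y_1(t)|\le M$. Since $Y=y_1e_2$, the representation \eqref{eq.Xvoc} gives
\[
\|X(t)\|_1\le M+K(|\xi_0|+|\xi_1|)+\|I+A\|_1\,\frac{KM}{\alpha}.
\]
So $x$ and $x'$ are bounded for every choice of $(\xi_0,\xi_1)$.

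\emph{(i)$\Rightarrow$(ii).} We only need (i) for a single initial condition, say $\xi=0$, since every term below is controlled by bounds on $x$ and $x'$. If $|x|,|x'|\le B$, then each term in \eqref{eq.y1xrep} is bounded:
\[
|y_1(t)|\le B+|x'(0)|+(1+a)B+bB.
\]
The identity \eqref{eq.y1xrep} comes from integration by parts and holds for every $t$ with no limiting argument, so it can be used directly here.

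\emph{(ii)$\Rightarrow$(iii).} For $t\ge1$ and $\theta\in[0,1]$, formula \eqref{eq.fthetarepy1} gives $|f_\theta(t)|\le 3M$. For $t<1$ we have $(t-\theta)^+\in[0,t]$, and integrating $y_1'=-y_1+f$ over $[(t-\theta)^+,t]$ gives the same identity with $t-\theta$ replaced by $(t-\theta)^+$. The bound $3M$ therefore holds for all $t\ge0$, uniformly in $\theta$. Alternatively, for $t<1$ one can simply note that $|f_\theta(t)|\le\int_0^1|f|<\infty$ by local integrability.

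\emph{(iii)$\Rightarrow$(ii).} This is the step needing the most care. Assume $|f_\theta(t)|\le B'$ for all $t$ and $\theta$. We reuse the splitting from the proof of Theorem~\ref{thm.xxprintf}:
\[
y_1=\exp_1\ast f_1+F_1-\exp_1\ast F_1,\qquad F_1(t)=\int_0^t(\delta f)(s)\,ds=\int_0^1 f_\theta(t)\,d\theta.
\]
The Fubini identity gives $|F_1(t)|\le B'$, and $|f_1|\le B'$ by hypothesis. Hence $|y_1(t)|\le B'+B'+B'=3B'$.

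The main point to check is the integration by parts $\exp_1\ast\delta f=F_1-\exp_1\ast F_1$, which uses $F_1(0)=0$ and the absolute continuity of $F_1$. Both were already used in Theorem~\ref{thm.xxprintf}, so no new obstacle arises; the uniform boundedness is simply carried through each term in place of convergence to zero.
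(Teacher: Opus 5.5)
Your proof is correct and is essentially the argument the paper leaves to the reader. It uses the representations \eqref{eq.Xvoc}, \eqref{eq.y1xrep} and \eqref{eq.fthetarepy1}, together with the splitting $y_1=\exp_1\ast f_1+F_1-\exp_1\ast F_1$ from the proof of Theorem~\ref{thm.xxprintf}, with ``integrable convolved with bounded is bounded'' taking the place of each convergence step; you also rightly note that (iii)$\Rightarrow$(ii) no longer needs the Gripenberg--Londen--Staffans uniform-boundedness lemma, since uniform boundedness is now the hypothesis.
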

	
	Now, for completeness, we ask: what characterises the situation where $x, x'$ and $x''$ tend to zero? The answer, obviously, is that we should have $f(t)\to 0$ as $t\to\infty$. Clearly, since $f(t)=x''(t)+ax'(t)+bx(t)$, the convergence of $x$ and its derivatives forces $f(t)\to 0$ as $t\to\infty$. On the other hand, we see that $f(t)\to 0$ as $t\to\infty$ implies $F(t)\to 0$ as $t\to\infty$, and therefore $X(t)\to 0$ as $t\to\infty$, which is equivalent to $x(t)\to 0$ as $t\to\infty$ and $x'(t)\to 0$ as $t\to\infty$. Since $f(t)\to 0$ as $t\to\infty$, and $x''(t)=f(t)-ax'(t)-bx(t)$, we see that $x''(t)\to 0$ as $t\to\infty$. Therefore, we have a characterisation of when $x$ and its derivatives tend to zero:
	\begin{theorem} \label{thm.xf} 
		Let $f\in C([0,\infty);\mathbb{R})$ and let $x$ be the unique continuous solution of the perturbed differential equation \eqref{eq.x}. Suppose moreover that $a>0$, $b>0$. Then the following are equivalent:
		\begin{itemize}
			\item[(i)] For every $(\xi_0,\xi_1)\in \mathbb{R}^2$, the solution $x(\cdot;\xi_0,\xi_1)$ of \eqref{eq.x} obeys $x(t;\xi_0,\xi_1)\to 0$, $x'(t;\xi_0,\xi_1)\to 0$, $x''(t;\xi_0,\xi_1)\to 0$ as $t\to\infty$;
			\item[(ii)] $f(t)\to 0$ as $t\to\infty$; 
		\end{itemize}
	\end{theorem}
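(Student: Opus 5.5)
The proof has two directions, and the paper has essentially already sketched both; the plan below makes each step precise.

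\emph{(ii) implies (i).} Assume $f$ is continuous with $f(t)\to 0$. Then $F(t)=f(t)e_2\to 0$. Use the variation of constants formula
\[
X(t)=\Phi(t)\xi+\int_0^t \Phi(t-s)F(s)\,ds,
\]
together with the bound $\|\Phi(t)\|_1\leq Ke^{-\alpha t}$. The first term tends to zero exponentially fast. For the convolution term, fix $\varepsilon>0$ and choose $T$ with $|f(s)|\leq\varepsilon$ for $s\geq T$. Split the integral at $T$:
\begin{itemize}
\item the piece over $[0,T]$ is bounded by $Ke^{-\alpha(t-T)}\int_0^T|f|$, which tends to zero as $t\to\infty$;
\item the piece over $[T,t]$ is bounded by $\varepsilon K/\alpha$.
\end{itemize}
Hence $X(t)\to 0$, so $x(t)\to 0$ and $x'(t)\to 0$. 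Alternatively, $f\to 0$ gives $f_\theta(t)\to 0$ for each $\theta$, since $|f_\theta(t)|\leq \sup_{[t-1,t]}|f|$, and one may then invoke Theorem~\ref{thm.xxprintf}.

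Because $f$ is continuous, the solution is $C^2$ and the equation holds pointwise. Therefore $x''(t)=f(t)-ax'(t)-bx(t)\to 0$. This holds for every choice of $(\xi_0,\xi_1)$.

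\emph{(i) implies (ii).} It suffices to use a single solution, for instance $\xi_0=\xi_1=0$. Continuity of $f$ again ensures $x\in C^2$ and that the identity $f(t)=x''(t)+ax'(t)+bx(t)$ holds for every $t$, not merely almost everywhere. Each of the three terms on the right tends to zero by hypothesis, so $f(t)\to 0$.

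There is no real obstacle here. The only point needing care is the role of continuity of $f$: it is what makes $x''$ exist classically and the equation hold pointwise, so that "$x''(t)\to 0$" is meaningful and the final identity can be evaluated at every $t$. The splitting estimate for the convolution is routine.
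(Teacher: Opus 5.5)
Your proposal is correct and follows the paper's argument. The paper also deduces $X(t)\to 0$ from $F(t)=f(t)e_2\to 0$ via the variation of constants formula with the exponential bound on $\Phi$, reads off $x''(t)\to 0$ from $x''=f-ax'-bx$, and gets the converse directly from $f=x''+ax'+bx$; your splitting estimate just makes explicit the convolution step that the paper leaves implicit.
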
 
	Parallel to Theorem~\ref{thm.xxprintfbdd} above, a characterisation of when $x$, $x'$ and $x''$ are bounded follows along almost identical lines, and the characterising condition in this case is that $f$ is bounded.
	
	Taking Theorem~\ref{thm.xf}, Theorem~\ref{thm.xxprintf} and Theorem~\ref{thm.xxprxprprfintint} together, an appealing picture emerges. The ``standard'' pointwise condition $f(t)\to 0$ as $t\to\infty$ which gives  convergenece of $x$ to zero, is sufficiently strong to ensure that the derivative and second derivative also tend to zero, and this condition is exactly what is necessary to  characterise this strengthened convergence of the system. If we take the weaker first order integral condition hypotheses that $f_\theta(t)$ tends to zero, or $y_1(t)\to 0$ as $t\to\infty$, we lose control over the second derivative of $x$, but these are precisely the conditions that capture the convergence of $x$ and $x'$ to zero. 
	This explains the nature of Theorem~\ref{thm.xxprxprprfintint}: notice that $y_2(t)=(\exp_1\ast y_1)(t)$, so that $y_2$ implicitly involves two integrations, as does $F_{\theta_1,\theta_2}$; this allows us to retain control over $x$, but at the possible cost of losing control over the two derivatives $x'$ and $x''$.  
	
	In other words, each time we weaken the hypotheses on $f$ by integrating an extra time, we lose control over an extra derivative of $x$. But if $y_2$ or $F$ do not tend to zero, the asymptotic convergence of the underlying second order differential equation for $u$ is lost. Focusing on the $y$'s, we see in other words, with the standard recursive convolution notation, $g^{(*0)}:=\delta_0$ ($\delta_0$ is the Dirac delta distribution at zero), $g^{(*1)}:=g$ and 
	\[
	g^{(*n)}=g^{(*(n-1))}*g, \quad n\geq 2,
	\]
	that  the first parts of Theorems  \ref{thm.xf}, \ref{thm.xxprintf} and \ref{thm.xxprxprprfintint} can be unified into the single equivalence 
	\[
	y_j(t):=(\exp_1^{(*j)}*f)(t) \to 0 \quad t\to\infty, \quad \Longleftrightarrow \quad x^{(l)}(t)\to 0, \quad t\to\infty, \quad l=0,\ldots,2-j.
	\]
	where we used the convolution notational convention to define $y_0(t):=f(t)=(\delta_0\ast f)(t)=(\exp_1^{(*0)}*f)(t)$.

	In a later section, we will use representations of $x$, $x'$, $x''$ in terms of $y_1$, $y_2$ etc to show how we may characterise the boundedness and unboundedness of solutions as well, 
	
	\section{Representations of $x$, $y_2$ and $F$}
	This section is devoted to showing how $x_0$, $y_2$ and $F$ can be represented in terms of each other.
	
	We start by showing how $x_0=k\ast f$ and $y_2$ can be written in terms of one another. Start by observing that $y_2$ is the unique continuous solution of the second--order initial value problem 
	\[
	y''_2(t) + 2y_2'(t) + y_2(t) = f(t), \quad t \geq 0; \quad y_2(0)=y'_2(0)=0.
	\]
	The following result emerged from our calculations in the last section. 
	\begin{theorem}
		Let $a, b \in \mathbb{R}$. Let $k$ obey \eqref{eq.k}. Then $x_0=k\ast f$ is the unique solution to $x_0''(t)+ax_0'(t)+bx_0(t)=f(t)$ with $x_0(0)=0$, $x_0'(0)=0$. 
	\end{theorem}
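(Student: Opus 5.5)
The plan is to verify directly that $x_0=k\ast f$ is twice differentiable (in the appropriate sense for $f\in L^1_{loc}$) and satisfies the equation, then invoke uniqueness. Write
\[
x_0(t)=\int_0^t k(t-s)f(s)\,ds, \quad t\geq 0.
\]
Since $k\in C^2$ (indeed $C^\infty$) and $k(0)=0$, the Leibniz rule for differentiating under the integral gives
\[
x_0'(t)=k(0)f(t)+\int_0^t k'(t-s)f(s)\,ds=\int_0^t k'(t-s)f(s)\,ds .
\]
To make this rigorous for merely locally integrable $f$, I would write $k(t-s)=\int_s^t k'(u-s)\,du$ (using $k(0)=0$) and apply Fubini. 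This gives
\[
x_0(t)=\int_0^t\int_0^u k'(u-s)f(s)\,ds\,du,
\]
whose integrand is continuous in $u$. So $x_0\in C^1$ with the stated derivative, and $x_0(0)=0$, $x_0'(0)=0$ are immediate.

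Repeating the argument with $k'(t-s)=k'(0)+\int_s^t k''(u-s)\,du$ and $k'(0)=1$ yields
\[
x_0'(t)=\int_0^t f(s)\,ds+\int_0^t\int_0^u k''(u-s)f(s)\,ds\,du .
\]
Hence $x_0'$ is absolutely continuous, with
\[
x_0''(t)=f(t)+\int_0^t k''(t-s)f(s)\,ds \quad \text{for a.e. } t
\]
(for every $t$ if $f$ is continuous). Combining the three expressions and using \eqref{eq.k},
\[
x_0''+ax_0'+bx_0=f+\int_0^t\bigl(k''+ak'+bk\bigr)(t-s)f(s)\,ds=f .
\]

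For uniqueness, I would take the difference $w$ of two solutions with zero initial data. It solves the homogeneous equation \eqref{eq.u} with $w(0)=w'(0)=0$, so by uniqueness for the linear first order system $W'=AW$, $W(0)=0$, we get $w\equiv 0$. Equivalently, one can apply Gronwall's inequality to $\|W(t)\|_1$. Note that no sign conditions on $a,b$ are used anywhere, consistent with the statement holding for all $a,b\in\mathbb{R}$.

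The only delicate point is the interpretation of ``solution'' when $f$ is merely in $L^1_{loc}$. In that case $x_0'$ is absolutely continuous and the equation holds almost everywhere, which is the Carathéodory sense implicit in the paper's statement that $x$ is twice differentiable. The Fubini step above is exactly what justifies differentiating under the integral without assuming continuity of $f$, so I expect that to be the main technical step. Everything else is routine.
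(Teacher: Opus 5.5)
Your proof is correct, but it takes a different route from the paper. The paper gives no separate argument for this theorem. It reads the result off the variation-of-constants formula for the first-order system $X'=AX+F$, $X(0)=\xi$, from Section 2, namely $x(t)=\phi_{11}(t)\xi_0+\phi_{12}(t)\xi_1+\int_0^t \phi_{12}(t-s)f(s)\,ds$. Taking $\xi=0$, and noting that $\phi_{12}$ solves \eqref{eq.k} and hence equals $k$ by uniqueness for that initial value problem, gives $x_0=k\ast f$; uniqueness is inherited from the linear system.

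You instead verify the scalar formula directly. You use only $k\in C^2$, $k(0)=0$, $k'(0)=1$ and $k''+ak'+bk=0$, with Fubini doing the work of the Leibniz rule.

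The paper's route is quicker given the matrix formalism. It also yields at once the representation $x=x_H+k\ast f$ for arbitrary initial data, together with the companion formula $x'=\phi_{21}\xi_0+\phi_{22}\xi_1+k'\ast f$. However, it takes for granted that variation of constants holds for merely locally integrable forcing. That is essentially what your Fubini computation establishes, in scalar form.

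Your version is self-contained and makes the Carath\'eodory sense of ``solution'' explicit: $x_0\in C^1$, $x_0'$ absolutely continuous, and the equation holding a.e. It also produces $x_0'=k'\ast f$ and $x_0''=f+k''\ast f$ explicitly. Because it never uses the identification $k=\phi_{12}$, only the properties of $k$ listed in \eqref{eq.k}, it is in the same spirit as Theorem~\ref{thm.xrepF}.
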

	
	\begin{theorem} \label{thm.x0y2y2x0} 
		Let $k$ obey \eqref{eq.k}, $x_0=k\ast f$ and $y_2$ be given by \eqref{eq.y2}. Then
		\begin{equation} \label{eq.x0y2}
			x_0(t) = y_2(t) + \left( (k'' + 2k' + k) * y_2 \right)(t), \quad t\geq 0,
		\end{equation}
		and, with $g(t)=te^{-t}$, 
		\begin{equation} \label{eq.y2repx0}
			y_2(t) = x_0(t) + \left((g''+ag'+bg) * x_0 \right)(t)=:x_0(t)+(\phi\ast x_0)t, \quad t\geq 0,
		\end{equation}
		where $\phi$ is explicitly given by
		\begin{equation} \label{eq.phi}
			\phi(t) = (a-2)e^{-t} + (b-a+1)t e^{-t}, \quad t\geq 0.
		\end{equation}
	\end{theorem}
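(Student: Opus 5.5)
The plan is to prove both identities by a single device: integrate by parts inside a convolution, using the fact that each kernel and each solution starts with zero initial data. Write $L_1 = D^2+2D+1$ and $L = D^2+aD+b$. By the observation preceding the statement, $y_2 = g\ast f$ with $g(t)=te^{-t}$ solves $L_1 y_2 = f$ with $y_2(0)=y_2'(0)=0$. By the preceding theorem, $x_0=k\ast f$ solves $L x_0 = f$ with $x_0(0)=x_0'(0)=0$. The kernels satisfy $k(0)=0$, $k'(0)=1$ by \eqref{eq.k}, and $g(0)=0$, $g'(0)=1$ by direct computation.

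For \eqref{eq.x0y2}, I substitute $f = y_2''+2y_2'+y_2$ into $x_0 = k\ast f$. This gives
\[
x_0 = k\ast y_2'' + 2\,k\ast y_2' + k\ast y_2 .
\]
Integrating by parts once gives
\[
(k\ast h')(t) = k(0)h(t) - k(t)h(0) + (k'\ast h)(t).
\]
Applied with $h=y_2$, this yields $k\ast y_2' = k'\ast y_2$, since $k(0)=0$ and $y_2(0)=0$. Applied first with $h=y_2'$ and then again with $h=y_2$, it yields
\[
k\ast y_2'' = k'\ast y_2' = k'(0)\,y_2 - k'(t)\,y_2(0) + k''\ast y_2 = y_2 + k''\ast y_2 .
\]
Summing the three pieces gives $x_0 = y_2 + (k''+2k'+k)\ast y_2$, which is \eqref{eq.x0y2}.

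For \eqref{eq.y2repx0}, I repeat the same computation with the roles exchanged. Substituting $f = x_0''+ax_0'+bx_0$ into $y_2 = g\ast f$ and using $g(0)=0$, $g'(0)=1$, $x_0(0)=x_0'(0)=0$, I get
\[
g\ast x_0'' = x_0 + g''\ast x_0, \qquad g\ast x_0' = g'\ast x_0 .
\]
Hence $y_2 = x_0 + (g''+ag'+bg)\ast x_0$. To identify $\phi$, I compute $g'(t)=(1-t)e^{-t}$ and $g''(t)=(t-2)e^{-t}$. Then
\[
g''+ag'+bg = (t-2)e^{-t} + a(1-t)e^{-t} + bte^{-t} = (a-2)e^{-t}+(b-a+1)te^{-t},
\]
which is \eqref{eq.phi}.

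The only delicate point is regularity, since $f$ is merely locally integrable. I will note that $k$ and $g$ are $C^\infty$. The solutions $y_2$ and $x_0$ are $C^1$ with absolutely continuous first derivatives, and their second derivatives are locally integrable and satisfy the respective equations almost everywhere. This is enough to justify each integration by parts, via the product rule for absolutely continuous functions on $[0,t]$, and to replace $f$ by the differential expression inside the convolution. No assumption on the signs of $a$ or $b$ is needed, which matches the statement.
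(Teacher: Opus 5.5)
Your proposal is correct and follows the paper's proof: you substitute $f=y_2''+2y_2'+y_2$ (respectively $f=x_0''+ax_0'+bx_0$) into the convolution and integrate by parts once or twice, using $k(0)=g(0)=0$, $k'(0)=g'(0)=1$ and the zero initial data. Two details you supply are left to the reader in the paper: the explicit computation of $g'$ and $g''$ that yields the formula for $\phi$, and the absolute continuity of $x_0'$ and $y_2'$, which justifies integrating by parts when $f$ is only locally integrable.
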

	\begin{proof}
		We prove \eqref{eq.x0y2}; the proof of \eqref{eq.y2repx0} is essentially identical, and left to the reader, exploiting the fact that $y_2(t)=(g\ast f)(t)$, where $g(t)=te^{-t}$ satisfies the differential equation $g''+2g'+g=0$ with $g(0)=0$ and $g'(0)=1$. Because of this, we can reverse the roles of $x_0$ and $y_2$, where, in the proof below, the role of $k$ is fulfilled by $g$.  
		
		Since $x_0 = k * f$ and substitute $f = y''_2 + 2y'_2 + y_2$, we have 
		\begin{equation} \label{eq.x0repyypryprpr}
			x_0 = k * (y''_2 + 2y'_2 + y_2) = (k * y''_2) + 2(k * y'_2) + (k * y_2).
		\end{equation}
		We consider the second term on the right hand side of \eqref{eq.x0repyypryprpr}. Integrating by parts gives
		\begin{equation} \label{eq.ky2pr}
			\int_0^t k(t-s)y_2'(s)\,ds = k(0) y_2(t)-k(t)y_2(0) + \int_{0}^t k'(t-s)y_2(s)\,ds = (k'\ast y_2)(t),
		\end{equation}
		using $y_2(0)=k(0)=0$. For the first term on the righthand side of \eqref{eq.x0repyypryprpr}, we need to integrate by parts twice. 
		This gives 
		\[
		\int_0^t k(t-s)y_2''(s)\,ds
		= k(0)y_2'(t)-k(t)y_2'(0)+\int_0^t k'(t-s)y_2'(s)\,ds
		=(k'\ast y_2')(t),
		\]	
		since $k(0)=0$ and $y_2'(0)=0$, and integrating by parts again, we get 
		\begin{equation} \label{eq.kpry2pr}
			(k\ast y_2'')(t)=\int_0^t k'(t-s)y_2'(s)\,ds
			=k'(0)y_2(t)-k'(t)y_2(0)+\int_0^t k''(t-s)y_2(s)\,ds = y_2(t)+(k''\ast y_2)(t),
		\end{equation}
		since $k'(0)=1$ and $y_2(0)=0$. Combining this and \eqref{eq.ky2pr} with \eqref{eq.x0repyypryprpr}, we get the claimed formula \eqref{eq.x0y2}.
	\end{proof}
	
	This result means we can immediately prove the first equivalence (i) $\Longleftrightarrow$ (ii) in Theorem~\ref{thm.xxprxprprfintint}. 
	Since $a$ and $b$ are positive and $k$ obeys \eqref{eq.k}, we have that $k''$, $k'$ and $k$ are in $L^1(0,\infty)$, and moreover that $x_H$ given by \eqref{eq.xHeqn} tends to zero as $t\to\infty$. We have $x(t;\xi_0,\xi_1)=x_H(t)+x_0(t)$. If $y_2(t)\to 0$ as $t\to\infty$, it therefore follows from the integrability of $k^{(j)}$ for $j=0,1,2$, and the representation \eqref{eq.x0y2} that $x_0(t)\to 0$ as $t\to\infty$, and therefore that $x(t;\xi_0,\xi_1)\to 0$ as $t\to\infty$. Conversely, if $x(t;\xi_0,\xi_1)\to 0$ as $t\to\infty$, since $x_H(t)\to 0$ as $t\to\infty$, it follows that $x_0(t)\to 0$ as $t\to\infty$. Therefore, by \eqref{eq.y2repx0} and the integrability of the function $\phi$ in \eqref{eq.phi}, we see that $y_2(t)\to 0$ as $t\to\infty$.
	
	Next, we give a representation of $F$ in terms of $y_2$. 	
	
	\begin{theorem} \label{thm.Frepy2}
		Let $y_2$ be given by \eqref{eq.y2}, and $F$ by \eqref{eq.Ftheta1theta2}. Let $\theta_1,\theta_2\in [0,1]$ and $t\geq 2$. Then 
		\begin{multline} \label{eq.Frepy2}
			F_{\theta_1,\theta_2}(t)
			=
			y_2(t) - y_2(t-\theta_1) - y_2(t-\theta_2) + y_2(t-\theta_1-\theta_2) \\+ 2 \int_{t-\theta_1}^t (y_2(s) - y_2(s-\theta_2)) \, ds + \int_{t-\theta_1}^t \int_{s-\theta_2}^s y_2(u) \, du \, ds.
		\end{multline}
	\end{theorem}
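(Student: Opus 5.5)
The plan is to substitute the differential equation for $y_2$ directly into the definition of $F_{\theta_1,\theta_2}$ and integrate term by term, using the fundamental theorem of calculus. Recall from Section 3 that $y_2=g\ast f$ with $g(t)=te^{-t}$, and that $y_2''+2y_2'+y_2=f$ with $y_2(0)=y_2'(0)=0$. The restriction $t\geq 2$, together with $\theta_1,\theta_2\in[0,1]$, guarantees that $s-\theta_2\geq t-\theta_1-\theta_2\geq 0$ for every $s\in[t-\theta_1,t]$. Hence all the positive parts in \eqref{eq.Ftheta1theta2} can be dropped, and
\[
F_{\theta_1,\theta_2}(t)=\int_{t-\theta_1}^t\int_{s-\theta_2}^s f(u)\,du\,ds .
\]

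The first step is a regularity check, which I expect to be the only real (if mild) obstacle, since $f$ is merely locally integrable. Differentiating $y_2(t)=\int_0^t(t-s)e^{-(t-s)}f(s)\,ds$ gives
\[
y_2'(t)=\int_0^t (1-(t-s))e^{-(t-s)}f(s)\,ds=y_1(t)-y_2(t),
\]
since the boundary term vanishes because $g(0)=0$. Here $y_1$ from \eqref{eq.y1} is absolutely continuous with $y_1'=-y_1+f$ almost everywhere. So $y_2\in C^1$, $y_2'$ is absolutely continuous, and
\[
y_2''=y_1'-y_2'=f-2y_2'-y_2 \quad \text{almost everywhere},
\]
which confirms that $f=y_2''+2y_2'+y_2$ almost everywhere. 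This justifies applying the fundamental theorem of calculus to $y_2'$, namely $\int_\alpha^\beta y_2''(u)\,du=y_2'(\beta)-y_2'(\alpha)$.

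Next I compute the inner integral for fixed $s\in[t-\theta_1,t]$:
\[
\int_{s-\theta_2}^s f(u)\,du=\bigl(y_2'(s)-y_2'(s-\theta_2)\bigr)+2\bigl(y_2(s)-y_2(s-\theta_2)\bigr)+\int_{s-\theta_2}^s y_2(u)\,du .
\]
Then I integrate each of the three pieces over $s\in[t-\theta_1,t]$.

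\begin{itemize}
\item The first piece, by the fundamental theorem of calculus applied to the $C^1$ function $y_2$, gives
\[
\bigl(y_2(t)-y_2(t-\theta_1)\bigr)-\bigl(y_2(t-\theta_2)-y_2(t-\theta_1-\theta_2)\bigr),
\]
which are exactly the four point terms in \eqref{eq.Frepy2}.
\item The second piece gives $2\int_{t-\theta_1}^t \bigl(y_2(s)-y_2(s-\theta_2)\bigr)\,ds$ unchanged.
\item The third piece gives the double integral of $y_2$ unchanged.
\end{itemize}

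Summing the three contributions yields \eqref{eq.Frepy2}. No Fubini interchange is required, since every integrand is continuous except $f$, and $f$ is handled solely through the almost-everywhere identity established in the regularity step.
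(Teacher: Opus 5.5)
Your proof is correct and follows the paper's argument essentially verbatim: integrate $y_2''+2y_2'+y_2=f$ over $[s-\theta_2,s]$, then over $s\in[t-\theta_1,t]$, and evaluate the $y_2'$ terms by the fundamental theorem of calculus. Your regularity check ($y_2'=y_1-y_2$ is absolutely continuous, so the equation holds almost everywhere when $f$ is merely locally integrable) is a useful justification that the paper leaves implicit.
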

	\begin{proof}
		Since $y_2$ satisfies the second order differential equation  $y_2''(u) + 2y_2'(u) + y_2(u) = f(u)$, integrate with respect to $u$ over the interval $[s-\theta_2, s]$, assuming $s \ge 1$:
		\[
		\int_{s-\theta_2}^s y''_2(u) \, du + 2\int_{s-\theta_2}^s y'_2(u) \, du + \int_{s-\theta_2}^s y_2(u) \, du = \int_{s-\theta_2}^s f(u) \, du,
		\]
		which gives 
		\[
		\int_{s-\theta_2}^s f(u) \, du = 
		[y'_2(s) - y'_2(s-\theta_2)] + 2[y_2(s) - y_2(s-\theta_2)] + \int_{s-\theta_2}^s y_2(u) \, du. 
		\]
		Next, we integrate this result with respect to $s$ over the interval $[t-\theta_1, t]$, assuming $t \ge 2$:
		\[
		\int_{t-\theta_1}^t \int_{s-\theta_2}^s f(u) \, du \, ds=
		\int_{t-\theta_1}^t [y'_2(s) - y'_2(s-\theta_2)] \, ds + 2\int_{t-\theta_1}^t [y_2(s) - y_2(s-\theta_2)] \, ds + \int_{t-\theta_1}^t \int_{s-\theta_2}^s y_2(u) \, du \, ds.
		\]
		Since the lower limits in the integral on the lefthand side are non-negative (because $t\geq 2$ and $\theta_1,\theta_2\in [0,1]$), the left hand side is exactly $F_{\theta_1,\theta_2}(t)$.  
		On the righthand side, we  evaluate the first term explicitly:
		\begin{align*}
			\int_{t-\theta_1}^t (y'_2(s) - y'_2(s-\theta_2)) \, ds 
			&= y_2(t) - y_2(t-\theta_1) - y_2(t-\theta_2) + y_2(t-\theta_1-\theta_2). 
		\end{align*}
		Substituting this back into the integrated equation yields:
		\begin{multline*}
			F_{\theta_1,\theta_2}(t)=
			y_2(t) - y_2(t-\theta_1) - y_2(t-\theta_2) + y_2(t-\theta_1-\theta_2) \\
			+ 2 \int_{t-\theta_1}^t (y_2(s) - y_2(s-\theta_2)) \, ds + \int_{t-\theta_1}^t \int_{s-\theta_2}^s y_2(u) \, du \, ds,
		\end{multline*}
		as required. 
	\end{proof}
	An immediate consequence of this representation is that $y_2(t)\to 0$ as $t\to\infty$ implies that $F_{\theta_1,\theta_2}(t)$ converges uniformly (in $(\theta_1,\theta_2)\in [0,1]^2$) to $0$ as $t\to\infty$, and therefore in turn that $F_{\theta_1,\theta_2}(t)$ converges pointwise. This means we have proven the implications (ii) $\Rightarrow$ (iii) $\Rightarrow$ (iv) in Theorem~\ref{thm.xxprxprprfintint}. 
	It is self--evident that since $x$ satisfies a second order differential equation, we can equally derive the following representation of $F$ in terms of $x$ for $t\geq 2$ and $\theta_1$ and $\theta_2$ in $[0,1]$:
	\begin{multline} \label{eq.Frepx}
		F_{\theta_1,\theta_2}(t)=
		x(t) - x(t-\theta_1) - x(t-\theta_2) + x(t-\theta_1-\theta_2) \\
		+ a \int_{t-\theta_1}^t (x(s) - x(s-\theta_2)) \, ds + b\int_{t-\theta_1}^t \int_{s-\theta_2}^s x(u) \, du \, ds.
	\end{multline}
	
	\section{Representation of $x_0$ and $y$ in terms of $F$}
	We state as a lemma a result we alluded to at the beginning of the paper. It was originally proven in Gripenberg, Londen and Staffans, with more details given in Appleby and Lawless. 
	\begin{lemma} \label{lemma:delta_integral}
		Suppose $g \in L_{loc}^1[0, \infty)$. For each $\theta \in [0, 1]$, define the moving average integral:
		\[
		g_\theta(t) = \int_{(t-\theta)^+}^t g(s) \, ds, \quad t \ge 0.
		\]
		Define the difference function $(\delta g)(t) = g(t) - g_1(t)$.
		Then for all $t \ge 0$:
		\begin{equation}
			\int_0^t (\delta g)(s) \, ds = \int_0^1 g_\theta(t) \, d\theta.
		\end{equation}
	\end{lemma}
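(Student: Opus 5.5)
We prove the identity by a Fubini argument, applied directly to the right-hand side. Both sides are absolutely continuous in $t$ and vanish at $t=0$, since $g_\theta(0)=0$. So we may simply compute. Write $\chi(s,\theta,t)$ for the indicator of the event $(t-\theta)^+\le s\le t$, i.e. $\max(t-\theta,0)\le s\le t$. Then
\[
\int_0^1 g_\theta(t)\,d\theta=\int_0^1\int_0^t \chi(s,\theta,t)\,g(s)\,ds\,d\theta .
\]
Since $g\in L^1[0,t]$ and the integrand is bounded in absolute value by $|g(s)|$ on $[0,t]\times[0,1]$, Fubini's theorem applies. For fixed $s\in[0,t]$, the $\theta$-section is $\{\theta\in[0,1]:\theta\ge t-s\}$, which has measure $(1-(t-s))^+$. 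Hence
\[
\int_0^1 g_\theta(t)\,d\theta=\int_{(t-1)^+}^t \bigl(1-(t-s)\bigr)g(s)\,ds .
\]

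Next we compute the left-hand side. We have $\int_0^t(\delta g)(s)\,ds=\int_0^t g(s)\,ds-\int_0^t g_1(r)\,dr$. Writing $g_1(r)=\int_0^r \mathbf{1}\{r-1\le s\}\,g(s)\,ds$ and applying Fubini again on the triangle $0\le s\le r\le t$, the $r$-section for fixed $s$ is $[s,\min(s+1,t)]$, of length $\min(1,t-s)$. So
\[
\int_0^t g_1(r)\,dr=\int_0^t \min(1,t-s)\,g(s)\,ds,
\]
and therefore
\[
\int_0^t(\delta g)(s)\,ds=\int_0^t \bigl(1-\min(1,t-s)\bigr)g(s)\,ds=\int_0^t \bigl(1-(t-s)\bigr)^+g(s)\,ds.
\]
This coincides with the expression obtained for the right-hand side, since $(1-(t-s))^+$ vanishes exactly when $s<t-1$.

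There is no real obstacle here. The only care needed is in the two section computations, in particular the truncation at $0$ when $t<1$. That case is handled automatically by the positive-part bookkeeping: for $t<1$ both kernels reduce to $1-(t-s)$ on all of $[0,t]$. Equivalently, one could differentiate both sides in $t$ (almost everywhere) and compare, but the direct double-Fubini computation avoids any regularity issues with $g_\theta$ as a function of $t$.
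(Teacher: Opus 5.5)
Your proof is correct. The two Fubini computations, namely the $\theta$-section of measure $(1-(t-s))^+$ on the right and the $r$-section of length $\min(1,t-s)$ on the left, show that both sides equal $\int_0^t (1-(t-s))^+ g(s)\,ds$, and they handle the truncation at $0$ correctly. This is the Fubini argument the paper points to in Section 2; the paper gives no proof of its own and refers the reader to Gripenberg--Londen--Staffans and Appleby--Lawless.
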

	We now use this formula to obtain a representation for a convolution integral.
	Let $f \in L^1_{\text{loc}}([0, \infty))$ be a locally integrable function. 
	We now derive explicit formulas connecting the convolution $k * f$ to the functional $F_{(\theta_1,\theta_2)}$, eliminating any direct dependence on $f$. 
	
	\begin{theorem} \label{thm.xrepF}
		Let $k \in C^2([0, \infty))$ with $k(0)=0$ and $k'(0)=1$. Let $f$ be locally integrable, and $F$ given by \eqref{eq.Ftheta1theta2}. 
		Then $(k * f)(t)$ is given by:
		\begin{multline} \label{eq.x0y2intermsofF}
			(k * f)(t) = (k * F_{(1,1)})(t) + \left( k' * \int_0^1 F_{(\phi, 1)} \, d\phi \right)(t) \\
			+ \left( k' * \int_0^1 F_{(1, \theta)} \, d\theta \right)(t) + \iint_{[0,1]^2} F_{(\phi, \theta)}(t) \, d\phi \, d\theta + \left( k'' * \iint_{[0,1]^2} F_{(\phi, \theta)} \, d\phi \, d\theta \right)(t).
		\end{multline}
	\end{theorem}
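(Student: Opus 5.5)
The plan is to apply Lemma~\ref{lemma:delta_integral} twice, once for each of the two integrations hidden in $F$, and to move the resulting derivatives onto $k$ by integrating by parts. The key structural observation is that $F_{(\theta_1,\theta_2)}=(f_{\theta_2})_{\theta_1}$, where $f_{\theta}$ is the moving average \eqref{eq.ftheta} and the outer subscript denotes the same moving-average operation applied to the function $f_{\theta_2}$. This holds directly from \eqref{eq.Ftheta1theta2}, since $F_{(\theta_1,\theta_2)}(t)=\int_{(t-\theta_1)^+}^t f_{\theta_2}(s)\,ds$.

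\textbf{First level.} Set $G(t):=\int_0^t(\delta f)(s)\,ds$. By Lemma~\ref{lemma:delta_integral}, $G(t)=\int_0^1 f_\theta(t)\,d\theta$. Moreover $G$ is absolutely continuous with $G(0)=0$ and $G'=\delta f=f-f_1$ a.e. Hence $f=f_1+G'$, and integrating by parts with $k(0)=0$, $G(0)=0$ gives
\[
k*f=k*f_1+k*G'=k*f_1+k'*G.
\]

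\textbf{Second level, applied to $f_1$.} The function $f_1$ is continuous, hence locally integrable, so Lemma~\ref{lemma:delta_integral} applies to $g=f_1$. We have $(f_1)_1=F_{(1,1)}$ and $(f_1)_\phi=F_{(\phi,1)}$. So $f_1=F_{(1,1)}+H'$ with $H(t)=\int_0^1F_{(\phi,1)}(t)\,d\phi$ and $H(0)=0$. Integrating by parts as before yields
\[
k*f_1=k*F_{(1,1)}+k'*\int_0^1F_{(\phi,1)}\,d\phi,
\]
which gives the first two terms of \eqref{eq.x0y2intermsofF}.

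\textbf{Second level, applied to $G$.} For each fixed $\theta$ apply the lemma to $g=f_\theta$. This gives $f_\theta=F_{(1,\theta)}+\frac{d}{dt}\int_0^1F_{(\phi,\theta)}\,d\phi$, with the antiderivative vanishing at $0$. Integrate in $\theta$ over $[0,1]$ and interchange $d/dt$ with $d\theta$, justified by writing the antiderivative as $\int_0^t(\delta f_\theta)(s)\,ds$ and using Fubini. Then
\[
G=\int_0^1F_{(1,\theta)}\,d\theta+J', \qquad J(t):=\iint_{[0,1]^2}F_{(\phi,\theta)}(t)\,d\phi\,d\theta, \quad J(0)=0.
\]
Finally, $k'*J'=k'(0)J(t)-k'(t)J(0)+k''*J=J+k''*J$, using $k'(0)=1$. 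This produces the remaining three terms of \eqref{eq.x0y2intermsofF}.

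\textbf{Main obstacle.} The algebra is routine; the care needed is measure-theoretic. I must check that $(\theta,s)\mapsto f_\theta(s)$ and $(\phi,\theta,t)\mapsto F_{(\phi,\theta)}(t)$ are jointly measurable and locally bounded; they are continuous in all variables since $f\in L^1_{loc}$. That is what licenses Fubini for the identity $\int_0^1\int_0^t(\delta f_\theta)(s)\,ds\,d\theta=\int_0^t\int_0^1(\delta f_\theta)(s)\,d\theta\,ds$. It also gives that $G$ and $J$ are absolutely continuous, so the integrations by parts with merely a.e. derivatives are valid.
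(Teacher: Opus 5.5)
Your proof is correct and follows the paper's proof essentially step by step. It uses the same split $f=f_1+\delta f$ and the same integrations by parts with $k(0)=0$, $k'(0)=1$. It also applies Lemma~\ref{lemma:delta_integral} a second time to $f_1$ and to $\int_0^1 f_\theta\,d\theta$. The only cosmetic difference is that you apply the lemma to each $f_\theta$ and then integrate in $\theta$, using Fubini to swap $d\theta$ and $ds$. The paper instead applies it once to $b=\int_0^1 f_\theta\,d\theta$ and uses Fubini to identify $b_1$ and $b_\phi$ as $\theta$-integrals of $F$.
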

	We notice that this develops a consequence of Lemma~\ref{lemma:delta_integral} that we earlier hinted at, and which has been exploited by Gripenberg, Londen and Staffans and Appleby and Lawless in analysis of Volterra differential equations: suppose $k\in C^1([0,\infty))$ and $k(0)=1$. Let $f$ be locally integrable, and $f_\theta$ be given by \eqref{eq.ftheta}. Then 
	\[
	(k\ast f)(t)=(k\ast f_1)(t)+\int_0^1 f_\theta(t)\,d\theta + \left(k'\ast \int_0^1 f_\theta \,d\theta\right)(t), \quad t\geq 0.
	\] 
	Note also that we have made no assumption here that $k$ satisfies a particular differential equation, merely that is obeys certain smoothness and boundary conditions. This also renders it suitable for the asymptotic analysis of forced second--order convolution integrodifferential equations, where the solution depends on the convolution of a (possible irregular) forcing function and a differential resolvent that solves an unforced second order convolution integrodifferential equation.

	\begin{proof}
		We start with $I = k * f$ and decompose $f = f_1 + \delta f$: write
		\[ I = I_1 + I_2, \quad \text{where } I_1 = k * f_1, \quad I_2 = k * \delta f. \]
		\textbf{Step 1: Analysis of $I_2$}
		We apply integration by parts to $I_2 = \int_0^t k(t-s) (\delta f)(s) \, ds$. Using Lemma \ref{lemma:delta_integral}, we have that $b(s):=\int_0^1 f_\theta(s)\,d\theta$ implies $b'(s)=(\delta f)(s)$. Therefore 
		\[
		\int_0^t k(t-s)(\delta f)(s)\,ds
		= k(0)b(t)
		-k(t)b(0)+\int_0^t k'(t-s)b(s)\,ds
		=(k'\ast b)(t).
		\]
		Now, using the notation of Lemma~\ref{lemma:delta_integral}, we write $b = b_1 + \delta b$, so that 
		\begin{equation} \label{eq.I2}
			I_2(t)=\int_0^t k'(t-s)b_1(s)\,ds+\int_0^t k'(t-s)(\delta b)(s)\,ds.
		\end{equation}
		We now analyse these two integrals. First, we find $b_1$. Applying the definitions of $b_1$, $f_\theta$, and $F_{(1,\theta)}$ in order, we get  
		\begin{equation} \label{eq.b1}
			b_1(t) = \int_{(t-1)^+}^t b(s) \, ds = \int_0^1 \int_{(t-1)^+}^t f_\theta(s) \, ds \, d\theta
			=\int_0^1 \int_{(t-1)^+}^t
			\int_{(s-\theta)^+}^s f(u) \, du
			\,ds\,d\theta
			= \int_0^1 F_{(1,\theta)}(t)\,d\theta.
		\end{equation}
		Next, we deal with $(k'\ast (\delta b))(t)$. Using the definition of $b_\phi(t):=\int_{(t-\phi)^+}^t b(s)\,ds$ and applying Lemma \ref{lemma:delta_integral}, we have $\int_0^s (\delta b)(u)\,du = \int_0^1 b_\phi(s)\,d\phi$. Thus, if $v(s)=\int_0^1 b_\phi(s)\,d\phi$, then $v'(s)=(\delta b)(s)$. Now, integrating by parts, we get  
		\begin{equation} \label{eq.kprdelb}
			\int_0^t k'(t-s)(\delta b)(s)\,ds
			=k'(0)v(t)-k'(t)v(0)+\int_0^t k''(t-s)v(s)\,ds.
		\end{equation}
		Now, to compute $v$, we need $b_\phi$. Using in turn the definition of $b_\phi$, then the definition of $b$, Fubini's theorem, and finally the definitions of $f_\theta$ and $F$, we arrive at 
		\[ b_\phi(s) = \int_{(s-\phi)^+}^s b(u) \, du = \int_{(s-\phi)^+}^s \int_0^1 f_\theta(s)\,d\theta\,du = \int_0^1 \int_{(s-\phi)^+}^s f_\theta(u) \, du \, d\theta = \int_0^1 F_{(\phi, \theta)}(s) \, d\theta. \]
		Therefore
		\begin{equation} \label{eq.v}
			v(t)=\int_0^1 b_\phi(t)\,d\phi = \int_0^1 \int_0^1 F_{(\phi,\theta)}(t)\,d\theta\, d\phi.
		\end{equation}
		Since $F_{(\phi,\theta)}(0)=0$, we have $v(0)=0$. 
		Therefore from \eqref{eq.kprdelb}, and using the fact that $k'(0)=1$, we have  
		\begin{equation*} 
			\int_0^t k'(t-s)(\delta b)(s)\,ds
			=v(t)+\int_0^t k''(t-s)v(s)\,ds.
		\end{equation*}
		Combining this with \eqref{eq.v}, 
		\eqref{eq.b1} and \eqref{eq.I2}, we get that 
		\begin{equation} \label{eq.I2final}
			I_2(t)=\left(k'\ast \int_0^1 F_{(1,\theta)}(\cdot)\,d\theta\right)(t)+ \int_0^1 \int_0^1 F_{(\phi,\theta)}(t)\,d\theta\, d\phi
			+\left(k''\ast \int_0^1 \int_0^1 F_{(\phi,\theta)}(\cdot)\,d\theta\, d\phi\right)(t).
		\end{equation}
		
		\textbf{Step 2: Analysis of $I_1$}
		It remains to consider $I_1(t) = (k * f_1)(t)$. Let $a = f_1$. Using the notation of Lemma~\ref{lemma:delta_integral}, we write $a = a_1 + \delta a$, so that 
		\[
		I_1(t)=\int_0^t k(t-s)a_1(s)\,ds + \int_0^t k(t-s)(\delta a)(s)\,ds. 
		\]
		For the first term on the right hand side, we need an expression for $a_1$. From the definitions of $a_1$, $a$, $f_1$ and $F$, we find that  
		\[
		a_1(t) = \int_{(t-1)^+}^t a(s) \, ds = \int_{(t-1)^+}^t \int_{(s-1)^+}^s f(u) \, du \, ds = F_{(1,1)}(t),
		\]
		so $(k\ast a_1)(t) = (k\ast F_{(1,1)})(t)$. 
		
		For the second term, let $v_a(s) = \int_0^1 a_\phi(s)\,d\phi$. Then, by Lemma~\ref{lemma:delta_integral}, $v_a'(s)=(\delta a)(s)$. Integrating by parts we get 
		\begin{equation} \label{eq.kdela}
			\int_0^t k(t-s)(\delta a)(s)\,ds = k(0)v_a(t)-k(t)v_a(0)+\int_0^t k'(t-s)v_a(s)\,ds.
		\end{equation}
		Next, we find an expression for $a_\phi(s)$: by the definition of $a_\phi$, the fact $a=f_1$, and the definition of $F$, we get 
		\[ 
		a_\phi(s) = \int_{(s-\phi)^+}^s a(u) \, du = \int_{(s-\phi)^+}^s \int_{(u-1)^+}^u f(y) \, dy \, du = F_{(\phi, 1)}(s). \]
		Therefore 
		\[
		v_a(s)=\int_0^1 F_{(\phi, 1)}(s)\,d\phi,
		\]
		so as $F_{(\phi,1)}(0)=0$, $v_a(0)=0$. Using this and $k(0)=0$ in \eqref{eq.kdela} we get 
		\[
		\int_0^t k(t-s)(\delta a)(s)\,ds = (k'\ast v_a)(t) = \left(k'\ast \int_0^1 F_{(\phi, 1)}(\cdot)\,d\phi\right)(t), \quad t\geq 0.	
		\]
		Combining this formula with the fact that $k\ast a_1 = k\ast F_{(1,1)}$, we have that 
		\[
		I_1(t)= (k\ast F_{(1,1)})(t)+\left(k'\ast \int_0^1 F_{(\phi, 1)}(\cdot)\,d\phi\right)(t), \quad t\geq 0.
		\]
		Finally, combining this formula with \eqref{eq.I2final}, we see that $I=k\ast f=I_1+I_2$ has the claimed representation. 
	\end{proof} 
	We notice that $x_0=k\ast f$ fulfills all the hypotheses of the theorem, since $k$ obeys \eqref{eq.k}. Likewise, with $g(t)=te^{-t}$, we see that $g$ is in $C^2([0,\infty)$ and $g(0)=0$, $g'(0)=1$. Moreover, $y_2=g\ast f$, so $g$ can play the role of $k$ in Theorem~\ref{thm.xrepF}. Moreover, we see by the hypothesis $a>0$, $b>0$ that $k,k', k''$ are in $L^1(0,\infty)$, as are $g, g'$ and $g''$. Now, suppose that 
	$F_{(\theta_1,\theta_2)}(t)\to 0$ as $t\to\infty$ uniformly in $(\theta_1,\theta_2)\in [0,1]^2$. Therefore the functions 
	\[
	\int_0^1 F_{(\phi,1)}(t)\,d\phi, \quad \int_0^1 F_{(1,\theta)}(t)\,d\theta, \quad \text{and} \quad  \int_0^1	\int_0^1 F_{(\phi,\theta)}(t)\,d\phi \,d\theta. 
	\] 
	all tend to zero as $t\to\infty$. Therefore, since $k$, $k'$ and $k''$ are all in $L^1(0,\infty)$, all the terms on the righthand side of \eqref{eq.x0y2intermsofF} tend to zero as $t\to\infty$, so $x_0(t)\to 0$ as $t\to\infty$. Therefore $x(t;\xi_0,\xi_1)\to 0$ as $t\to\infty$; a similar argument, using the integrability of $g$, $g'$ and $g''$ gives $y_2(t)\to 0$ as $t\to\infty$. Thus, we have shown that statement (iii) in Theorem~\ref{thm.xxprxprprfintint} implies statement (i) and statement (ii), and we have already shown that statements (i) and (ii) are equivalent. Hence statements (i)--(iii) in Theorem~\ref{thm.xxprxprprfintint} are equivalent, and statement (iii) clearly implies statement (iv). The proof that statement (iv) implies statement (iii) (pointwise convergence implies uniform convergence) is given in the last section. 
	
	\section{Characterisation of Convergence, Boundedness and Unboundedness} 
	
	The representations for $x_0$ and $y_2$ in terms of $F$ furnished by Theorems~\ref{thm.xrepF}, for $F$ in terms of $y_2$ from Theorem \ref{thm.Frepy2}, and for $x_0$ in terms of $y_2$ and $y_2$ in terms of $x_0$ in Theorem~\ref{thm.x0y2y2x0} now allow us to give a complete classification of whether $x$ tends to zero, is bounded but does not tend to zero, or is unbounded. 
	To this end, we define
	\begin{align} \label{eq.Xbar}
		\bar{X}&=\limsup_{t\to\infty} |x(t)|\in [0,\infty],\\
		 \label{eq.Fbar}
		\bar{F}&=\limsup_{t\to\infty} \sup_{\theta_1,\theta_2\in [0,1]^2} |F_{(\theta_1,\theta_2)}(t)|\in [0,\infty],\\
		\label{eq.Y2bar}
		\bar{Y_2}&=\limsup_{t\to\infty} |y_2(t)|\in [0,\infty].
	\end{align}
	All these are well-define elements of the extended real line. 
	\begin{theorem}
		Suppose that $a>0$, $b>0$ and that $f\in L^1_{loc}(\mathbb{R}^+)$. Let $x$ be the unique continuous solution of \eqref{eq.x}, and suppose that $\bar{X}$ and $\bar{Y_2}$ are given by \eqref{eq.Xbar} and \eqref{eq.Y2bar}, and $y_2$ is given by \eqref{eq.y2}.
		\begin{itemize}
			\item[(i)] If $\bar{Y_2}=0$, then $\bar{X}=0$.
			\item[(ii)] If $\bar{Y_2}\in (0,\infty)$, then $\bar{X}\in (0,\infty)$.
			\item[(iii)] If $\bar{Y_2}=+\infty$, then  $\bar{X}=+\infty$.
		\end{itemize}
	\end{theorem}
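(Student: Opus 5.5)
The plan is to use the two mutual representations of Theorem~\ref{thm.x0y2y2x0}, together with the splitting $x=x_H+x_0$ and the exponential decay of $x_H$. The decay of $x_H$ gives $\limsup_{t\to\infty}|x(t)|=\limsup_{t\to\infty}|x_0(t)|$, so throughout I may replace $\bar X$ by $\bar X_0:=\limsup_{t\to\infty}|x_0(t)|$. The kernels $\psi:=k''+2k'+k$ and $\phi$ from \eqref{eq.phi} both lie in $L^1(0,\infty)$, because $a>0$ and $b>0$ give $|k^{(j)}(t)|\le Ce^{-\alpha t}$.

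The basic tool is an elementary limsup estimate for convolutions. If $h\in L^1(0,\infty)$ and $w$ is locally bounded with $\bar W:=\limsup_{t\to\infty}|w(t)|<\infty$, then
\[
\limsup_{t\to\infty}|(h\ast w)(t)|\le \|h\|_{L^1}\bar W .
\]
To prove it, fix $\epsilon>0$ and choose $T$ with $|w(s)|\le \bar W+\epsilon$ for $s\ge T$. Split the convolution integral at $s=T$. The piece over $[0,T]$ is bounded by $\sup_{[0,T]}|w|\int_{t-T}^t|h|$, which tends to $0$ as $t\to\infty$.

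Part (i) has already been shown in the text following Theorem~\ref{thm.x0y2y2x0}. For part (ii), I argue as follows.
\begin{itemize}
\item[(a)] If $\bar Y_2<\infty$, then \eqref{eq.x0y2} and the estimate give $\bar X_0\le(1+\|\psi\|_{L^1})\bar Y_2<\infty$.
\item[(b)] For the lower bound, suppose $\bar X_0=0$. Part (i) in reverse, that is, the already established implication (i)$\Rightarrow$(ii) of Theorem~\ref{thm.xxprxprprfintint} via \eqref{eq.y2repx0} and $\phi\in L^1$, gives $\bar Y_2=0$. This contradicts $\bar Y_2>0$, so $\bar X_0>0$ and hence $\bar X\in(0,\infty)$.
\end{itemize}

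For part (iii), I argue by contraposition. Suppose $\bar X<\infty$. Since $x_0$ is continuous on $[0,\infty)$ and bounded near infinity, it is bounded. Then \eqref{eq.y2repx0} and the estimate give $\bar Y_2\le(1+\|\phi\|_{L^1})\bar X_0<\infty$. Hence $\bar Y_2=\infty$ forces $\bar X=\infty$.

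I expect no serious obstacle. The only point needing care is the limsup lemma for convolutions, specifically the tail splitting that uses local boundedness of $w$ on $[0,T]$. This holds here because $x_0$ and $y_2$ are continuous.
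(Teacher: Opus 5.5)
Your proposal is correct and follows the paper's proof essentially step for step. Part (i) is quoted from the earlier argument. Part (ii) gets the upper bound from \eqref{eq.x0y2} and excludes $\bar X=0$ by the contradiction through \eqref{eq.y2repx0} (the paper's (i)$\Rightarrow$(ii) of Theorem~\ref{thm.xxprxprprfintint}). Part (iii) is the contrapositive via \eqref{eq.y2repx0}. The only difference is cosmetic: your $\limsup$ convolution estimate replaces the paper's global boundedness argument, and it yields the slightly sharper bounds $\bar X\le(1+\|k''+2k'+k\|_{L^1})\bar Y_2$ and $\bar Y_2\le(1+\|\phi\|_{L^1})\bar X$ at no extra cost.
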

	\begin{proof}
		Part (i). We have already shown that  $y_2(t)\to 0$ as $t\to\infty$ implies $x(t)\to 0$ as $t\to\infty$, which is precisely the content of part (i). 
		
		Part (ii). If $\bar{Y_2}\in (0,\infty)$, $y_2$ is bounded. By Theorem \ref{thm.x0y2y2x0}, since $k, k', k''$ are in $L^1$ it follows from  \eqref{eq.x0y2} that $x_0$ is  bounded, since it is the sum of a bounded function, and an $L^1$ function convolved with a bounded function. Since $x_H$ tends to zero, $x=x_H+x_0$ is boundeded, and so $\bar{X}<+\infty$. Suppose now by way of contradiction that $\bar{X}=0$. Then by the implication (i) $\Rightarrow$ (ii) in Theorem~\ref{thm.xxprxprprfintint}, we have that $y_2(t)\to 0$ as $t\to\infty$. But then $\bar{Y_2}=0$, contradicting the hypothesis. Hence $\bar{X}\in (0,\infty)$.
		
		Part (iii). Suppose by way of contradiction that $\bar{X}<+\infty$, so $x$ is bounded. Since $x_H$ tends to zero, this means that $x_0$ is bounded. Then, by Theorem~\ref{thm.x0y2y2x0}, specifically \eqref{eq.y2repx0}, it follows that $y_2$ is the sum of a bounded function and the convolution of an integrable function $\phi$ and the bounded function $x_0$. Therefore $y_2$ is bounded, so $\bar{Y_2}<+\infty$. But this contradicts the hypothesis, so we must have $\bar{X}=+\infty$, as required.
	\end{proof}
	
	A completely analogous classification results from using $\bar{F}$. 
	
	\begin{theorem}
		Suppose that $a>0$, $b>0$ and that $f\in L^1_{loc}(\mathbb{R}^+)$. Let $x$ be the unique continuous solution of \eqref{eq.x}, and suppose that $\bar{X}$ and $\bar{F}$ are given by \eqref{eq.Xbar} and \eqref{eq.Fbar}, and $F$ is given by \eqref{eq.Ftheta1theta2}.
		\begin{itemize}
			\item[(i)] If $\bar{F}=0$, then $\bar{X}=0$.
			\item[(ii)] If $\bar{F}\in (0,\infty)$, then $\bar{X}\in (0,\infty)$.
			\item[(iii)] If $\bar{F}=+\infty$, then  $\bar{X}=+\infty$.
		\end{itemize}
	\end{theorem}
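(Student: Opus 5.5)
The plan is to copy the proof of the preceding theorem for $\bar{Y_2}$, replacing the pair $(x_0,y_2)$ with $(x_0,F)$. The two representations needed are Theorem~\ref{thm.xrepF}, which expresses $x_0=k\ast f$ in terms of $F$, and formula \eqref{eq.Frepx}, which expresses $F$ in terms of $x$. Alternatively, $F$ can be expressed through $y_2$ via Theorem~\ref{thm.Frepy2} and then $y_2$ through $x_0$ via \eqref{eq.y2repx0}.

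\textbf{Part (i).} If $\bar F=0$, then $F_{(\theta_1,\theta_2)}(t)\to 0$ uniformly in $(\theta_1,\theta_2)\in[0,1]^2$. This is statement (iii) of Theorem~\ref{thm.xxprxprprfintint}, and we showed after Theorem~\ref{thm.xrepF} that it implies statement (i), so $\bar X=0$.

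\textbf{Part (ii).} Suppose $\bar F<\infty$. Then there are $T$ and $M$ with $\sup_{\theta_1,\theta_2}|F_{(\theta_1,\theta_2)}(t)|\le M$ for $t\ge T$. On $[0,T]$ we have the trivial bound $|F_{(\theta_1,\theta_2)}(t)|\le \int_0^T|f|$, so $F$ is bounded on $[0,\infty)\times[0,1]^2$. The functions $\int_0^1F_{(\phi,1)}\,d\phi$, $\int_0^1F_{(1,\theta)}\,d\theta$, $\iint F_{(\phi,\theta)}$ and $F_{(1,1)}$ are then all bounded. Since $k,k',k''\in L^1(0,\infty)$, each term of \eqref{eq.x0y2intermsofF} is bounded, hence $x_0$ is bounded, and $x=x_H+x_0$ is bounded, giving $\bar X<\infty$.

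Now suppose, for contradiction, that $\bar X=0$. The equivalence (i)$\Leftrightarrow$(ii) of Theorem~\ref{thm.xxprxprprfintint}, already established, gives $y_2(t)\to0$. Theorem~\ref{thm.Frepy2} then shows that $F$ tends to zero uniformly, so $\bar F=0$, which contradicts $\bar F>0$. One could instead use \eqref{eq.Frepx} directly. One caveat: the hypothesis ``for every $(\xi_0,\xi_1)$'' is harmless here, because $x_H\to0$ for all initial data, so $\bar X$ does not depend on $(\xi_0,\xi_1)$.

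\textbf{Part (iii).} Suppose, for contradiction, that $\bar X<\infty$. Then $x$ is bounded on $[0,\infty)$, being continuous and having finite $\limsup$ of $|x|$. By \eqref{eq.Frepx}, for $t\ge2$,
\[
|F_{(\theta_1,\theta_2)}(t)|\le (4+2a+b)\sup_{s\ge0}|x(s)|
\]
uniformly in $(\theta_1,\theta_2)$. Hence $\bar F<\infty$, a contradiction, so $\bar X=+\infty$.

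The only step requiring care is the boundedness of $F$ on the initial interval $[0,T]$ in part (ii), which is needed so that the convolutions in \eqref{eq.x0y2intermsofF} are globally bounded; as shown above, the crude bound by $\int_0^T|f|$ is enough.
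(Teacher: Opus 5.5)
Your proposal is correct and follows the paper's proof essentially step for step. For (ii) you use Theorem~\ref{thm.xrepF} with $k,k',k''\in L^1$ to get boundedness, then the established equivalence together with Theorem~\ref{thm.Frepy2} to rule out $\bar X=0$, and for (iii) the triangle inequality across \eqref{eq.Frepx}; the only cosmetic difference is that you bound $F$ on $[0,T]$ by $\int_0^T|f|$ where the paper appeals to continuity of the averaged functions in $t$, and both work.
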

	\begin{proof}
		Part (i). If $\bar{F}=0$, we have that $F_{(\theta_1,\theta_2)}(t)\to 0$ as $t\to\infty$ uniformly in $(\theta_1,\theta_2)$. The implication (iii) $\Rightarrow$ (i) in Theorem~\ref{thm.xxprxprprfintint} yields  $x(t)\to 0$ as $t\to\infty$, or $\bar{X}=0$, as needed.  
		
		Part (ii). If $\bar{F}\in (0,\infty)$, it means that there is $T_1>2$ such that 
		\[
		\sup_{\theta_1,\theta_2\in [0,1]^2} F_{(\theta_1,\theta_2)}(t) \leq \bar{F}+1, \quad t\geq T_1.
		\]
		As a result, each of 
		\[
		F_{(1,1)}(t), \quad 
		\int_0^1 F_{(\phi,1)}(t)\,d\phi, \quad \int_0^1 F_{(1,\theta)}(t)\,d\theta, \quad \text{and} \quad  \int_0^1	\int_0^1 F_{(\phi,\theta)}(t)\,d\phi \,d\theta. 
		\] 
		is absolutely bounded by $\bar{F}+1$ for all $t\geq T_1$. Since all of these are continuous functions, they are uniformly absolutely bounded by $F^\ast\in (0,\infty)$ on $[0,\infty)$. Therefore by Theorem~\ref{thm.xrepF}, we get the estimate
		\[
		|x_0(t)|\leq \int_0^\infty |k(s)|\,ds F^\ast+2\int_0^\infty |k'(s)|\,ds F^\ast+F^\ast+\int_0^\infty |k''(s)|\,ds F^\ast,
		\]
		by taking the triangle inequality and suprema across \eqref{eq.x0y2intermsofF}.
		Since $x_H$ tends to zero, we have that $x$ is uniformly bounded. Thus $\bar{X}<+\infty$. Suppose now by way of contradiction that $\bar{X}=0$. Then $x(t)\to 0$ as $t\to\infty$. This implies that $y_2(t)\to 0$ as $t\to\infty$ by Theorem~\ref{thm.xxprxprprfintint}. But then, by Theorem~\ref{thm.Frepy2}, we see that $\bar{Y_2}=0$ implies $\bar{F}=0$, which contradicts the hypothesis that $\bar{F}>0$. Hence $\bar{X}\in (0,\infty)$ as claimed.  
		
		Part (iii). Suppose by way of contradiction that $\bar{X}<+\infty$, so $x$ is uniformly bounded by $B$. Then by taking the triangle inequality across \eqref{eq.Frepx} for $t\geq 2$ and $\theta_1,\theta_2\in [0,1]$, we get 
		\[
		|F_{\theta_1,\theta_2}(t)|\leq 
		4B  
		+ |a| \int_{t-\theta_1}^t 2B \, ds + |b|\int_{t-\theta_1}^t \int_{s-\theta_2}^s B \, du \, ds
		\leq 4B+2B|a|+|b|B\theta_1\theta_2\leq 4B+2B|a|+B|b|.
		\]
		From this $\bar{F}<+\infty$. But this contradicts the hypothesis that $\bar{F}=+\infty$, thereby contradicting the supposition $\bar{X}<+\infty$. Hence $\bar{X}=+\infty$, as claimed. 
	\end{proof}

	\section{Uniform Convergence of $F_\Theta$}
	In this section, we strengthen the pointwise properties of $F_\Theta$ to uniform properties over the parameter space $S = [0,1]^2$. This allows us to assert the equivalence of conditions (iii) and (iv) in Theorem~\ref{thm.xxprxprprfintint}, and thereby completes the proof of the main result of the paper. For applications, this means it becomes unnecessary to prove the uniform convergence of $F_{\theta_1,\theta_2}(t)$, which may be difficult or tedious; instead it will be enough to check merely that $F_{(\theta_1,\theta_2)}(t)\to 0$ for each choice of $(\theta_1,\theta_2)\in [0,1]^2$. 
	\begin{theorem} \label{thm:uniformity}
		Let $S = [0,1]^2$. If $\lim_{t \to \infty} F_\Theta(t) = 0$ for every $\Theta \in S$, then the convergence is uniform in $\Theta$. That is,
		\[ \lim_{t \to \infty} \sup_{\Theta \in S} |F_\Theta(t)| = 0. \]
\end{theorem}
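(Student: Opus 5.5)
The plan is not to prove uniformity directly. Instead I would route the argument through the representations already established. Suppose we knew that $F_\Theta(t)$ is \emph{uniformly bounded} in $\Theta\in S$ for all large $t$. Then the Dominated Convergence Theorem, together with the pointwise hypothesis, gives
\[
\int_0^1 F_{(\phi,1)}(t)\,d\phi\to 0,\quad \int_0^1 F_{(1,\theta)}(t)\,d\theta\to 0,\quad \iint_{[0,1]^2}F_{(\phi,\theta)}(t)\,d\phi\,d\theta\to 0,
\]
and $F_{(1,1)}(t)\to 0$. Applying Theorem~\ref{thm.xrepF} with $g(t)=te^{-t}$ in the role of $k$, and using that $g,g',g''\in L^1(0,\infty)$, every term in \eqref{eq.x0y2intermsofF} tends to zero, so $y_2(t)\to 0$. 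Theorem~\ref{thm.Frepy2} then yields $\sup_{\Theta\in S}|F_\Theta(t)|\to 0$. So the whole task reduces to proving that uniform boundedness; measurability of the integrands is handled by the joint continuity noted below.

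\textbf{Preliminary facts.} First, $(t,\theta_1,\theta_2)\mapsto F_{(\theta_1,\theta_2)}(t)$ is jointly continuous on $[2,\infty)\times S$. This holds because $f_{\theta_2}(s)$ is jointly continuous in $(s,\theta_2)$ and is locally bounded. Second, for $t\ge 3$ and admissible parameters, splitting the outer interval gives the additivity relations
\[
F_{(\theta_1+h,\theta_2)}(t)=F_{(\theta_1,\theta_2)}(t)+F_{(h,\theta_2)}(t-\theta_1),
\]
and splitting the inner interval, using $f_{\theta+h}(s)=f_\theta(s)+f_h(s-\theta)$, gives
\[
F_{(\theta_1,\theta_2+k)}(t)=F_{(\theta_1,\theta_2)}(t)+F_{(\theta_1,k)}(t-\theta_2).
\]

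\textbf{Baire step (the main obstacle).} For $N\ge 3$ set $E_N=\{\Theta\in S:\ |F_\Theta(t)|\le 1 \text{ for all } t\ge N\}$. By continuity in $\Theta$ each $E_N$ is closed, and by the pointwise convergence $\bigcup_N E_N=S$. The Baire category theorem therefore supplies $N$, $\delta\in(0,1)$ and a square $R=[\alpha_1,\alpha_1+\delta]\times[\alpha_2,\alpha_2+\delta]\subseteq E_N$. From the first additivity relation,
\[
F_{(h,\theta_2)}(t-\alpha_1)=F_{(\alpha_1+h,\theta_2)}(t)-F_{(\alpha_1,\theta_2)}(t).
\]
Hence $|F_{(h,\theta_2)}(s)|\le 2$ for $s\ge N$, $h\in[0,\delta]$ and $\theta_2\in[\alpha_2,\alpha_2+\delta]$. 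Doing the same in the second variable, with $\alpha_2$ and $\alpha_2+k$, gives $|F_{(h,k)}(s)|\le 4$ for all $h,k\in[0,\delta]$ and $s\ge N+1$. The care needed here is in keeping the shifts and the domains of the parameters correct; the case $\alpha_i+\delta$ near $1$ is harmless because $R\subseteq S$.

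\textbf{Extension to all of $S$.} Write any $\theta_1$ and $\theta_2$ as sums of at most $m=\lceil 1/\delta\rceil$ pieces of size at most $\delta$. Iterating both additivity relations expresses $F_\Theta(t)$ as a sum of at most $m^2$ terms of the form $F_{(h,k)}(t-\sigma)$ with $h,k\le\delta$ and $\sigma\le 2$. Therefore $\sup_{\Theta\in S}|F_\Theta(t)|\le 4m^2$ for all $t\ge N+3$. This is the required uniform bound, and the argument of the first paragraph completes the proof.
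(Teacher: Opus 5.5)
Your argument is correct, but it takes a different route from the paper's. The paper applies the Baire category theorem at every level $\epsilon>0$. From a box on which $|F_\Psi(t)|\le\epsilon$ for $t\ge m_0$, two applications of the decomposition identity give $|F_\Phi(t)|\le 4\epsilon$ on $[0,\delta]^2$. Subdivision then gives $\sup_{\Theta\in S}|F_\Theta(t)|\le C_\delta\,\epsilon$ for large $t$, which is uniform convergence directly. You apply Baire only once, at level $1$, so the same shifting and subdivision delivers only a uniform bound. You then upgrade boundedness together with pointwise convergence to uniform convergence analytically: dominated convergence on the averaged functionals, then Theorem~\ref{thm.xrepF} with $g(t)=te^{-t}$ to get $y_2(t)\to 0$, then Theorem~\ref{thm.Frepy2} to return to $F$. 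This is not circular, since those two theorems are proved independently of Theorem~\ref{thm:uniformity}. It is exactly the Gripenberg--Londen--Staffans strategy the paper sketches for the one-parameter family $f_\theta$ in Section~2, lifted to two parameters. The paper's route is self-contained: it uses only the shift-additivity of $F_\Theta$, so it would apply to any family with that structure, without the ODE representations. Yours asks less of the topological step (boundedness with an untracked constant) and proves (iv)$\Rightarrow$(ii) of Theorem~\ref{thm.xxprxprprfintint} directly as a by-product. Your bookkeeping is also more careful. You check the range of validity of the additivity identities and count the $m^2$ pieces correctly, whereas the paper's Step~3 writes $4N\epsilon$ where the two-coordinate subdivision actually gives $4N^2\epsilon$ (harmless, since $\epsilon$ is arbitrary).
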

To prove this, we begin by establishing a fundamental decomposition lemma. Note that because of the positive part truncations in the definition of $F_\Theta$, the algebraic decomposition holds only when $t$ is sufficiently large to avoid the boundary at $t=0$.
\begin{lemma}[Decomposition Identity]
	Let $\Theta = (\theta_1,\theta_2) \in [0,1]^2$. Let $e_k$ be the standard basis vector for the $k$-th component. For any $\delta \ge 0$, define $\Theta' = \Theta + \delta e_k$.
	If $t \ge 2 + 2\delta$, then:
	\begin{equation} \label{eq:decomp}
		F_{\Theta'}(t) = F_{\Theta}(t) + F_{\Theta_{\delta}}(t - \theta_k),
	\end{equation}
	where $\Theta_{\delta}$ replaces the $k$--th component of $\Theta$ by $\delta$. 
\end{lemma}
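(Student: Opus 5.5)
The identity is a splitting of the outer or inner interval of integration, so the plan is a direct computation in two cases, $k=1$ and $k=2$. We read $F_{\Theta}$ through \eqref{eq.Ftheta1theta2} for any nonnegative parameters, since $\Theta'$ may leave $[0,1]^2$. The first step is to check that for $t\ge 2+2\delta$ none of the positive-part truncations is active. Every lower limit that occurs, in $F_{\Theta'}(t)$, in $F_\Theta(t)$ and in $F_{\Theta_\delta}(t-\theta_k)$, is bounded below by $t-\theta_1-\theta_2-\delta\ge t-2-\delta\ge \delta\ge 0$. So throughout we may write
\[
F_{(\alpha,\beta)}(\tau)=\int_{\tau-\alpha}^{\tau}\int_{s-\beta}^{s} f(u)\,du\,ds
\]
with no $(\cdot)^+$. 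This is routine but must be done first, because the identity genuinely fails near $t=0$.

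\emph{Case $k=1$.} Here $\Theta'=(\theta_1+\delta,\theta_2)$. Set $h(s)=\int_{s-\theta_2}^s f(u)\,du$, so that $F_{\Theta'}(t)=\int_{t-\theta_1-\delta}^t h(s)\,ds$. Split the outer interval at $t-\theta_1$. The piece over $[t-\theta_1,t]$ is $F_\Theta(t)$. The piece over $[t-\theta_1-\delta,\,t-\theta_1]=[\tau-\delta,\tau]$, with $\tau=t-\theta_1$, is exactly $F_{(\delta,\theta_2)}(t-\theta_1)=F_{\Theta_\delta}(t-\theta_1)$.

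\emph{Case $k=2$.} Here $\Theta'=(\theta_1,\theta_2+\delta)$. For each $s\in[t-\theta_1,t]$, split the inner integral as
\[
\int_{s-\theta_2-\delta}^s f=\int_{s-\theta_2}^s f+\int_{s-\theta_2-\delta}^{s-\theta_2} f.
\]
Integrating the first term over $s\in[t-\theta_1,t]$ gives $F_\Theta(t)$. In the second term substitute $r=s-\theta_2$. The outer range becomes $[t-\theta_2-\theta_1,\,t-\theta_2]$ and the inner integral becomes $\int_{r-\delta}^r f$. This is precisely $F_{(\theta_1,\delta)}(t-\theta_2)=F_{\Theta_\delta}(t-\theta_2)$.

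There is no real obstacle. The only point needing care is the bookkeeping that every lower limit stays nonnegative under the hypothesis $t\ge 2+2\delta$, which is generous: $t\ge 2+\delta$ would already suffice. Once the truncations are removed, additivity of the Lebesgue integral over adjacent intervals and a translation in the outer variable give \eqref{eq:decomp} in both cases. Local integrability of $f$ justifies all the manipulations, as in the proof of Theorem~\ref{thm.Frepy2}.
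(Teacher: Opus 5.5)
Your proof is correct and follows the same argument as the paper. You drop the positive-part truncations for large $t$. For $k=1$ you split the outer interval at $t-\theta_1$; for $k=2$ you split the inner interval at $s-\theta_2$ and then translate by $r=s-\theta_2$. Your explicit check that every lower limit is at least $t-\theta_1-\theta_2-\delta\ge\delta\ge 0$, which shows that $t\ge 2+\delta$ already suffices, is extra care; the paper only asserts that the truncations can be dropped.
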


\begin{proof}[Proof of Lemma]
	We choose $t$ sufficiently large so that 
	positive parts in the lower limits of integration can be dropped. Thus for $\Theta = (\theta_1, \theta_2)$, and these choices of $t$, the functional is:
	\[
	F_{(\theta_1, \theta_2)}(t) = \int_{t-\theta_1}^t \left( \int_{t_1-\theta_2}^{t_1} f(t_2) \, dt_2 \right) dt_1.
	\]
	
	\textbf{Case 1: Incrementing the first parameter ($k=1$)}
	Let $\Theta' = (\theta_1 + \delta, \theta_2)$. We replace $\theta_1$ with $\theta_1+\delta$ in the outer integral limits, and split the integral
	\begin{align*}
	F_{(\theta_1+\delta, \theta_2)}(t) 
	&= \int_{t-(\theta_1+\delta)}^t \left( \int_{t_1-\theta_2}^{t_1} f(t_2) \, dt_2 \right) dt_1\\
	&= \int_{t-\theta_1}^t \left( \int_{t_1-\theta_2}^{t_1} f(t_2) \, dt_2 \right) dt_1 + \int_{t-\theta_1-\delta}^{t-\theta_1} \left( \int_{t_1-\theta_2}^{t_1} f(t_2) \, dt_2 \right) dt_1\\
	&=:F_{(\theta_1,\theta_2)}(t)+J_1(t).
	\end{align*}
	We claim $J_1(t)=F_{(\delta, \theta_2)}(t-\theta_1)$.
	To prove this, let us evaluate $F_{(\delta, \theta_2)}$ at the shifted time $\tau = t-\theta_1$:
		\[
		F_{(\delta, \theta_2)}(\tau) = \int_{\tau-\delta}^\tau \left( \int_{t_1-\theta_2}^{t_1} f(t_2) \, dt_2 \right) dt_1.
		\]
		Substituting $\tau = t-\theta_1$, the limits become $[(t-\theta_1)-\delta, t-\theta_1]$, which $J_1(t)$.
	Thus, when we increment the first parameter, $F_{(\theta_1+\delta, \theta_2)}(t) = F_{(\theta_1, \theta_2)}(t) + F_{(\delta, \theta_2)}(t-\theta_1)$, as claimed.
	
	\textbf{Case 2: Incrementing the second parameter ($k=2$)}
	Let $\Theta' = (\theta_1, \theta_2+\delta)$. We replace $\theta_2$ with $\theta_2+\delta$ in the inner integral limits:
	\begin{align*}
	F_{(\theta_1, \theta_2+\delta)}(t) &= \int_{t-\theta_1}^t \left( \int_{t_1-(\theta_2+\delta)}^{t_1} f(t_2) \, dt_2 \right) dt_1\\
	&= \int_{t-\theta_1}^t \left[ \int_{t_1-\theta_2}^{t_1} f(t_2) \, dt_2 + \int_{t_1-\theta_2-\delta}^{t_1-\theta_2} f(t_2) \, dt_2 \right] dt_1\\
	&= \int_{t-\theta_1}^t \int_{t_1-\theta_2}^{t_1} f(t_2) \, dt_2 \, dt_1 + \int_{t-\theta_1}^t \int_{t_1-\theta_2-\delta}^{t_1-\theta_2} f(t_2) \, dt_2 \, dt_1\\
	&=:F_{(\theta_1, \theta_2)}(t)+J_2(t).
	\end{align*}
		We must show $J_2(t) =F_{(\theta_1, \delta)}(t-\theta_2)$.
		Let us compute $F_{(\theta_1, \delta)}(\tau)$ for $\tau = t-\theta_2$:
		\[
		F_{(\theta_1, \delta)}(\tau) = \int_{\tau-\theta_1}^\tau \left( \int_{u-\delta}^u f(t_2) \, dt_2 \right) du.
		\]
		Now, perform the change of variables $u = t_1 - \theta_2$ ($t_1=u+\theta_2$). This gives
		\[
		F_{(\theta_1, \delta)}(\tau) = \int_{\tau-\theta_1}^\tau \left( \int_{u-\delta}^u f(t_2) \, dt_2 \right) du
		= 
		\int_{\tau-\theta_1+\theta_2}^{\tau+\theta_2} \left( \int_{t_1-\theta_2-\delta}^{t_1-\theta_2} f(t_2) \, dt_2 \right)
		dt_1.
		\]
		Since $\tau=t-\theta_2$, we get
		\[
		F_{(\theta_1, \delta)}(t-\theta_2) 
		=	\int_{t-\theta_1}^{t} \left( \int_{t_1-\theta_2-\delta}^{t_1-\theta_2} f(t_2) \, dt_2 \right)
		dt_1=J_2(t),
		\]
		as needed. 
	Thus, when we increment the second paramter, the formula $F_{(\theta_1, \theta_2+\delta)}(t) = F_{(\theta_1, \theta_2)}(t) + F_{(\theta_1, \delta)}(t-\theta_2)$ still holds, and the decomposition is valid irrespective of which parameter is incremented.
\end{proof}

\begin{proof}[Proof of Theorem~\ref{thm:uniformity}]		
	
	\textbf{Step 1: The Baire Category Theorem}
	Fix an arbitrary $\epsilon > 0$. We wish to find a time $T$ such that $|F_\Theta(t)| < \epsilon$ for all $t > T$ and all $\Theta \in S$.
	For each integer $m \ge 1$, define the set:
	\[ E_m = \left\{ \Theta \in S : \sup_{t \ge m} |F_\Theta(t)| \le \epsilon \right\}. \]
	Since $F_\Theta(t)$ is a continuous function of $\Theta$, the sets $E_m$ are closed.
	By assumption, for every fixed $\Theta$, $F_\Theta(t) \to 0$. Thus, every $\Theta$ belongs to some $E_m$ for $m$ large enough.
	\[ S = \bigcup_{m=1}^\infty E_m. \]
	By the Baire Category Theorem, at least one $E_{m_0}$ has a non-empty interior. Thus, there exists a closed ``box'' $B \subset E_{m_0}$ defined by:
	\[ B = \prod_{i=1}^2 [\alpha_i, \beta_i] \subset [0,1]^2, \quad \text{with } \beta_i - \alpha_i > 0. \]
	Let $\delta = \min_i (\beta_i - \alpha_i)$.
	For any $\Psi \in B$ and any $t \ge m_0$, we have $|F_\Psi(t)| \le \epsilon$.
	
	\textit{Step 2: Shifting to the Origin (Rigorous Induction)}
	We now show that this bound on the box $B$ implies a bound on the small box near the origin, $S_\delta = [0, \delta]^2$.
	Let $\Phi = (\phi_1,\phi_2) \in S_\delta$. We want to bound $|F_\Phi(t)|$.
	We proceed by finite induction on the number of "small" components. Let $V_k$ be the set of parameter vectors with exactly $k$ components from the "Small" set $[0, \delta]$ and $2-k$ components from the "Large" set $[\alpha_i, \beta_i]$.
	
	\begin{itemize}
		\item \textbf{Hypothesis $H_k$:} There exists a time horizon $T_k$ and a constant $C_k$ such that for any $\Psi \in V_k$, $|F_\Psi(t)| \le C_k \epsilon$ for all $t \ge T_k$.
		
		\item \textbf{Base Case ($k=0$):}
		$\Psi \in V_0$ means all components are in $[\alpha_i, \beta_i]$. Thus $\Psi \in B$.
		By Step 1, for $t \ge m_0$, $|F_\Psi(t)| \le \epsilon$.
		So, $H_0$ holds with $T_0 = m_0$ and $C_0 = 1$.
		
		\item \textbf{Inductive Step (From $k$ to $k+1$):}
		Assume $H_k$ holds. Consider a vector $\Psi_{next} \in V_{k+1}$.
		Assume the $(k+1)$-th small component is at index $j$, specifically $\phi_j$.
		We relate $\Psi_{next}$ to vectors in $V_k$ using the Decomposition Lemma.
		
		Consider two specific vectors in $V_k$:
		1.  $\Psi_{large}$: Identical to $\Psi_{next}$, but replace the component $\phi_j$ with $\alpha_j + \phi_j$.
		Note: Since $0 \le \phi_j \le \delta \le \beta_j - \alpha_j$, we have $\alpha_j \le \alpha_j + \phi_j \le \beta_j$. Thus, this component is now "Large" (in $B$). This vector has one fewer "Small" component ($k$ total).
		2.  $\Psi_{base}$: Identical to $\Psi_{next}$, but replace the component $\phi_j$ with $\alpha_j$. This component is explicitly "Large". This vector also has $k$ "Small" components.
		
		Apply the Decomposition Lemma to the vector $\Psi_{base}$ with increment $\phi_j$:
		\[ F_{\Psi_{large}}(t) = F_{\Psi_{base}}(t) + F_{\Psi_{next}}(t - \alpha_j). \]
		(Here, $\Psi_{large} = \Psi_{base} + \phi_j e_j$, and the "shifted" term corresponds to $\Psi_{next}$ because the parameter at index $j$ becomes $\phi_j$).
		
		Rearranging for the term we want to bound:
		\[ F_{\Psi_{next}}(t - \alpha_j) = F_{\Psi_{large}}(t) - F_{\Psi_{base}}(t). \]
		Let $\tau = t - \alpha_j$. Then $t = \tau + \alpha_j$.
		\[ |F_{\Psi_{next}}(\tau)| \le |F_{\Psi_{large}}(\tau + \alpha_j)| + |F_{\Psi_{base}}(\tau + \alpha_j)|. \]
		
		If $\tau \ge T_k$, then $\tau + \alpha_j \ge T_k$.
		Since $\Psi_{large}, \Psi_{base} \in V_k$, by the inductive hypothesis $H_k$:
		\[ |F_{\Psi_{large}}(\tau + \alpha_j)| \le C_k \epsilon \quad \text{and} \quad |F_{\Psi_{base}}(\tau + \alpha_j)| \le C_k \epsilon. \]
		Thus:
		\[ |F_{\Psi_{next}}(\tau)| \le 2 C_k \epsilon. \]
		So $H_{k+1}$ holds with $C_{k+1} = 2 C_k$ and $T_{k+1} = T_k$ (assuming $T_k$ is sufficiently large to satisfy decomposition validity).
	\end{itemize}
	
	After $2$ steps, we reach $V_2$, which is exactly $S_\delta$. Thus, for all $\Phi \in S_\delta$, and for $t$ sufficiently large:
	\[ |F_\Phi(t)| \le 4 \epsilon. \]
	
	\textit{Step 3: Extension to all of $S$}
	Any $\Theta \in S$ is a finite sum of elements from $S_\delta$: $\Theta = \sum_{j=1}^2 \Psi_j e_j$ with $\Psi_j \in [0, \delta]^2$ and $N = \lceil 1/\delta \rceil$.
	Repeated application of the Decomposition Lemma allows us to write:
	\[
	F_\Theta(t) = \sum_{j=1}^N F_{\Psi_j}(t - \tau_j),
	\]
	where $\tau_j$ are accumulated shifts. Since each $\Psi_j \in S_\delta$, we have $|F_{\Psi_j}(t)| \le 4 \epsilon$ for sufficiently large $t$.
	Thus, for $t$ large enough:
	\[ |F_\Theta(t)| \le \sum_{j=1}^N |F_{\Psi_j}(t - \tau_j)| \le 4N  \epsilon. \]
	Since $\epsilon$ was arbitrary, $\lim_{t \to \infty} \sup_{\Theta \in S} |F_\Theta(t)| = 0$.
\end{proof}

\section{Stability with Unbounded and Highly Oscillatory Forcing}
We now give an example, showing how badly behaved $f$ (oscillating with unbounded amplitude, and with unboundedly high frequency)
can still lead to asymptotically stable second order systems, in which the state converges, the velocity is bounded, but may have high frequency, while the acceleration is unbounded. 
\begin{lemma} \label{lemma.osc}			Let $A: [0, \infty) \to \mathbb{R}$ be a strictly positive, strictly increasing function of class $C^1$ such that $\lim_{t \to \infty} A(t) = \infty$. 
	Define 
	\begin{equation}  \label{example:oscf}
		f(t) = A(t)\sin\left(\int_0^t A(s) \, ds\right), \quad t\geq 0.
	\end{equation} 
	Then $\limsup_{t\to\infty} |f(t)|=+\infty$. If $y_1$ and $y_2$ are defined by \eqref{eq.y1} and \eqref{eq.y2}, then 
	\begin{enumerate}
		\item[(i)] $y_1(t)$ is bounded, but does not tend to zero as $t \to \infty$.
		\item[(ii)] $\lim_{t \to \infty} y_2(t) = 0$.
	\end{enumerate}
\end{lemma}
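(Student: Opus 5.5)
The whole argument rests on one observation. Put $\Lambda(t)=\int_0^t A(s)\,ds$. Then $f(t)=A(t)\sin\Lambda(t)=G'(t)$, where $G(t):=1-\cos\Lambda(t)$ and $G(0)=0$. So $f$ is an exact derivative of a bounded function. Since $A>0$ and $A(t)\to\infty$, $\Lambda$ is continuous and strictly increasing to $+\infty$. Hence there are times $t_n\to\infty$ with $\Lambda(t_n)=\pi/2+2\pi n$, at which $f(t_n)=A(t_n)\to\infty$; this gives $\limsup|f|=+\infty$. For (i) and (ii) I will integrate by parts once in each convolution, using $G(0)=0$. This turns $y_1=\exp_1\ast f$ and $y_2=g\ast f$, with $g(t)=te^{-t}$, into convolutions against $G$. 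The term $1$ in $G$ is then handled explicitly, and the term $\cos\Lambda$ by a second integration by parts that exploits the growing frequency.

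For (i): $y_1(t)=G(t)-(\exp_1\ast G)(t)=-\cos\Lambda(t)+e^{-t}+(\exp_1\ast\cos\Lambda)(t)$. Boundedness is immediate from this, and also follows from Theorem~\ref{thm.xxprintfbdd}, since $|f_\theta(t)|=|\cos\Lambda((t-\theta)^+)-\cos\Lambda(t)|\le 2$. I will show $(\exp_1\ast\cos\Lambda)(t)\to 0$ by the lemma below. Then $y_1(t)=-\cos\Lambda(t)+o(1)$, and since $\cos\Lambda(t)=1$ at the infinitely many times where $\Lambda(t)\in 2\pi\mathbb{N}$, $y_1$ does not tend to zero.

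For (ii): because $g(0)=0$, we get $y_2=g\ast G'=g'\ast G$, with $g'(t)=(1-t)e^{-t}$. Then $y_2(t)=\int_0^t g'(s)\,ds-(g'\ast\cos\Lambda)(t)$. The first term equals $te^{-t}\to 0$, because $\int_0^\infty g'=0$. So everything reduces to the following lemma.

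\emph{Lemma.} If $h\in C^1$ and $h,h'\in L^1(0,\infty)$ (this holds for $h=\exp_1$ and for $h=g'$), then $(h\ast\cos\Lambda)(t)\to 0$.

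To prove it, write $\cos\Lambda(s)=\frac{d}{ds}\sin\Lambda(s)\cdot\frac{1}{A(s)}$ and integrate by parts:
\[
\int_0^t h(t-s)\cos\Lambda(s)\,ds=\frac{h(0)\sin\Lambda(t)}{A(t)}+\int_0^t\Big(\frac{h'(t-s)}{A(s)}+h(t-s)\frac{A'(s)}{A(s)^2}\Big)\sin\Lambda(s)\,ds.
\]
The lower boundary term vanishes because $\sin\Lambda(0)=0$. The first term on the right tends to $0$ because $A\to\infty$. The term $|h'|\ast(1/A)$ is an $L^1$ function convolved with a function tending to zero, so it tends to $0$. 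For the last term, $A'/A^2=-(1/A)'\ge 0$ has integral at most $1/A(0)$, so it lies in $L^1$. Hence $|h|\ast(A'/A^2)$ is a convolution of a bounded function tending to zero with an $L^1$ function, and tends to $0$.

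The only delicate point is this oscillatory lemma. Its hypotheses that $A$ is increasing and $C^1$ are exactly what make $A'/A^2$ integrable. Everything else is bookkeeping with the identity $f=G'$.
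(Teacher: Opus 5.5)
Your proof is correct, but you handle the key oscillatory step differently from the paper. The paper writes $B$ for your $\Lambda$. It first shows that $C(t)=\int_0^t\cos\Lambda(s)\,ds$ has a finite limit $L$: it substitutes $u=\Lambda(s)$ and applies Dirichlet's test with the decreasing weight $1/A(\Lambda^{-1}(u))$. It then integrates by parts against $C$, obtaining $\exp_1\ast\cos\Lambda=C-\exp_1\ast C\to L-L$ and $g\ast\cos\Lambda=\exp_1\ast C-g\ast C\to L-L$. This uses the fact that convolution with an $L^1$ kernel of unit mass preserves limits.

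You instead integrate by parts inside the convolution via $\cos\Lambda=(\sin\Lambda)'/A$. Everything is then dominated by $|h(0)|/A(t)$, $|h'|\ast(1/A)$ and $|h|\ast(A'/A^2)$. Each of these tends to zero by the simpler principle that an $L^1$ function convolved with a bounded function tending to zero tends to zero.

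What each route buys:
\begin{itemize}
\item The paper's route needs only continuity and monotonicity of $A$, with no use of $A'$. It exhibits the cancellation as unit-mass kernels acting on the same limit $L$.
\item Your route avoids the inverse-function substitution and the auxiliary limit $L$. It gives one lemma for any $C^1$ kernel with $h,h'\in L^1$. You apply it to $\exp_1$ and to $g'$, which packages the paper's $K-J$ as $-g'\ast\cos\Lambda$. It also yields explicit bounds in terms of $1/A$.
\end{itemize}

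You also prove $\limsup|f|=+\infty$, which the paper asserts without argument.

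The only unstated detail in your proposal is that $|h|$ is bounded and tends to zero. This follows because $h'\in L^1$ forces $h$ to have a limit, and $h\in L^1$ forces that limit to be $0$. Alternatively, check it directly for $\exp_1$ and $g'$.
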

We prove this result in a moment. Immediately, however, it can be used to characterise the asymptotic behaviour of any second order linear equation forced by $f$. 
\begin{proposition}
	Let $a>0$, $b>0$ and suppose $f$ is given by \eqref{example:oscf} where $A$ obeys the above properties. Then the solution of \eqref{eq.x} obeys $x''(t)=f(t)+O(1)$ as $ t\to\infty$, and 
	\[
	\limsup_{t\to\infty}|x''(t)|=+\infty, \quad \limsup_{t\to\infty}|x'(t)|\in (0,\infty), \quad \lim_{t\to\infty}x(t)=0. 
	\]
\end{proposition}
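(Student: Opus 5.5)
We take Lemma~\ref{lemma.osc} as given and read off each claim of the proposition from the characterisation theorems established earlier.

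\emph{Step 1: $x(t)\to 0$.} By part (ii) of Lemma~\ref{lemma.osc}, $y_2(t)\to 0$ as $t\to\infty$. The implication (ii) $\Rightarrow$ (i) of Theorem~\ref{thm.xxprxprprfintint}, proved via the representation \eqref{eq.x0y2}, then gives $x(t)\to 0$ for every choice of initial data.

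\emph{Step 2: $\limsup_{t\to\infty}|x'(t)|\in(0,\infty)$.}
\begin{itemize}
\item By part (i) of the lemma, $y_1$ is bounded.
\item Theorem~\ref{thm.xxprintfbdd}, implication (ii) $\Rightarrow$ (i), then shows that $x$ and $x'$ are bounded, so $\limsup_{t\to\infty}|x'(t)|<\infty$.
\item For positivity, argue by contradiction. Suppose $x'(t)\to 0$. Since $x(t)\to 0$ by Step 1, Theorem~\ref{thm.xxprintf}, implication (i) $\Rightarrow$ (ii), would give $y_1(t)\to 0$.
\item This contradicts part (i) of the lemma. The only subtlety is that Theorem~\ref{thm.xxprintf} is phrased for every pair of initial conditions. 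However, \eqref{eq.y1xrep} shows directly that $x(t)\to0$ and $x'(t)\to 0$ for a single solution already force $y_1(t)\to 0$, so the argument works solution by solution.
\end{itemize}

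\emph{Step 3: $x''=f+O(1)$ and unboundedness of $x''$.} The differential equation gives $x''(t)=f(t)-ax'(t)-bx(t)$. Steps 1 and 2 show that $ax'+bx$ is bounded on $[0,\infty)$, hence $x''(t)=f(t)+O(1)$. Lemma~\ref{lemma.osc} states that $\limsup_{t\to\infty}|f(t)|=+\infty$. Since $x''$ differs from $f$ by a bounded function, $\limsup_{t\to\infty}|x''(t)|=+\infty$.

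All the substantive difficulty is deferred to Lemma~\ref{lemma.osc}, whose proof would proceed as follows.
\begin{itemize}
\item Set $\Lambda(t)=\int_0^t A$. Then $f=-\frac{d}{dt}\cos\Lambda$, so $f_\theta(t)=\cos\Lambda(t-\theta)-\cos\Lambda(t)$.
\item This is bounded by $2$, so $y_1$ is bounded by Theorem~\ref{thm.xxprintfbdd}.
\item Integrating by parts, $y_1=1-\cos\Lambda(t)-e^{-t}+\int_0^t e^{-(t-s)}\cos\Lambda(s)\,ds$, up to sign conventions. The last integral tends to zero: integrate by parts once more using $\cos\Lambda=(\sin\Lambda)'/A$ with $A\to\infty$ and $A$ increasing.
\item Hence $y_1(t)-(1-\cos\Lambda(t))\to 0$, and $y_1$ does not converge, because $\Lambda(t)\to\infty$ continuously.
\item Finally $y_2=\exp_1*y_1$, and $\exp_1*(1-\cos\Lambda)\to 1$, which would contradict $y_2\to0$. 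So the constant must cancel: the correct antiderivative is $1-\cos\Lambda$ itself, i.e.\ $y_1=(1-\cos\Lambda)-\exp_1*(1-\cos\Lambda)$, giving $y_2=\exp_1*(1-\cos\Lambda)-\exp_1^{*2}*(1-\cos\Lambda)$.
\item Both terms tend to $1$, so $y_2\to0$.
\end{itemize}
The main obstacle is therefore the oscillatory-integral estimate $\exp_1*\cos\Lambda\to 0$, where monotonicity of $A$ is used to control the integration by parts.
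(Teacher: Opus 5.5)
Your Steps 1--3 are exactly the paper's argument: $y_2\to0$ gives $x\to0$, boundedness of $y_1$ gives boundedness of $x,x'$, the contradiction via Theorem~\ref{thm.xxprintf} rules out $x'\to0$, and $x''=f-ax'-bx$ finishes. Your citation of Theorem~\ref{thm.xxprxprprfintint} for the first step is in fact the correct one, where the paper writes Theorem~\ref{thm.xf}. The only blemish is in your optional sketch of Lemma~\ref{lemma.osc}: your first formula for $y_1$ is off by $1-2e^{-t}$. The correct formula is $y_1(t)=-\cos\Lambda(t)+e^{-t}+(\exp_1*\cos\Lambda)(t)$, so $y_1+\cos\Lambda\to0$, not $y_1-(1-\cos\Lambda)\to0$. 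Your corrected form $y_1=(1-\cos\Lambda)-\exp_1*(1-\cos\Lambda)$ is right, and the conclusions you draw from it stand.
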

\begin{proof}
	Since $y_2(t)\to 0$ as $t\to\infty$, by Theorem~\ref{thm.xf}, we have that $x(t)\to 0$ as $t\to\infty$. Since $y_1$ is bounded, we have that $x'$ and $x$ are bounded. Suppose by way of contradiction, that $x'(t)\to 0$ as $t\to\infty$. Since we already know that $x(t)\to 0$ as $t\to\infty$, by Theorem \ref{thm.xxprintf} it follows that $y_1(t)\to 0$ as $t\to\infty$, which is a contradiction. Hence $x'$ is bounded, but does not tend to zero. Since $x$ and $x'$ are bounded, and $f$ is unbounded, from $x''=-ax'-bx+f$, we see that $x''$ is unbounded, with the claimed representation.
\end{proof}

\begin{proof}[Proof of Lemma~\ref{lemma.osc}]
	First, let $B(t) = \int_0^t A(s) \, ds$. Since $A(t) > 0$, $B(t)$ is strictly increasing. Furthermore, since $A$ is increasing and positive, $A(t) \ge A(0) > 0$, implying $B(t) \ge A(0)t$, so $\lim_{t \to \infty} B(t) = \infty$. 
	Since $B'(t) = A(t)$, we have 
	\[
	f(t) = B'(t)\sin(B(t)) = -\frac{d}{dt}\cos(B(t)).
	\]
	\textbf{Proof of (i):} 	We evaluate $y_1(t)$ using integration by parts. 
	This gives
	\begin{align*}
		y_1(t) 
		&= -\cos(B(t)) + e^{-t}\cos(B(0)) + J(t),
	\end{align*}
	where we define $J(t) = \int_0^t e^{-(t-s)}\cos(B(s)) \, ds$. Since $B(0) = 0$, we have $\cos(B(0)) = 1$. Therefore, since the first term on the righthand side is bounded, but does not tend to zero, it is enough to show that $J(t)\to 0$ as $t\to\infty$ to prove that $y_1$ is bounded, but does not tend to zero. 
	
	To do this define $C(t) = \int_0^t \cos(B(s)) \, ds$. Since $B$ is increasing, it has an increasing inverse, so integrating by substituting  $u = B(s)$, 
	we get:
	\[
	C(t) = \int_0^{B(t)} \frac{\cos(u)}{A(B^{-1}(u))} \, du.
	\]
	Next, note $u\mapsto \cos(u)$ has a bounded antiderivative $\sin(u)$. Because $A$ and $B$ are strictly increasing, $u \mapsto A(B^{-1}(u))$ is strictly increasing. Since $\lim_{t \to \infty} A(t) = \infty$, the function $u\mapsto 1/A(B^{-1}(u))$ monotonically decreases to $0$. By Dirichlet's Test for improper integrals, the improper integral converges to a finite limit. Write $L=\lim_{t \to \infty} C(t)$.
	Return to $J(t)$, and apply integration by parts again to get 
	\begin{align*}
		J(t) 
		&= C(t) - \int_0^t e^{-(t-s)} C(s) \, ds,
	\end{align*}
	since $C(0) = 0$. For the integral term, recall that the limit as $t\to\infty$ of the convolution $(j\ast g)(t)$, where $j$ is integrable and  $g$ tends to a limit $L$, is simply $L\int_0^\infty j(s)\,ds$. Since $\exp_1$ has integral 1, the last term has limit $L$. 
	Therefore, $\lim_{t \to \infty} J(t) = L - L = 0$, as required. 
	
	%
	
	\textbf{Proof of (ii):} We evaluate $y_2(t)$ using integration by parts. This gives 
	\begin{align*}
		y_2(t)
		&= t e^{-t} + \int_0^t (t-s)e^{-(t-s)}\cos(B(s)) \, ds - \int_0^t e^{-(t-s)}\cos(B(s)) \, ds 
		= t e^{-t} + K(t) - J(t),
	\end{align*}
	where $K(t) = \int_0^t (t-s)e^{-(t-s)}\cos(B(s)) \, ds$. We already established that $J(t) \to 0$ as $t\to\infty$ and trivially $t e^{-t} \to 0$ as $t\to\infty$. We need only show that $K(t) \to 0$ as $t\to\infty$. To do this, rewrite $K(t)$ using $C'(s) = \cos(B(s))$ and apply integration by parts to get 
	\begin{align*}
		K(t) 
		&= - \int_0^t (t-s)e^{-(t-s)}C(s) \, ds + \int_0^t e^{-(t-s)}C(s) \, ds.
	\end{align*}
	From the proof of (i), we know the second integral $\int_0^t e^{-(t-s)}C(s) \, ds \to L$ as $t\to\infty$. For the first integral, observe that the function $j(t)=te^{-t}$ has integral 1, so applying the principle above again, we have 
	\[
	\int_0^t (t-s) e^{-(t-s)} C(s)\,ds \to L\int_0^\infty s e^{-s}\,ds = L, \quad t\to\infty.
	\]
	Consequently, $\lim_{t \to \infty} K(t) = -L + L = 0$, from which we have $y_2(t)\to 0$, as required. 
\end{proof}

				\bibliographystyle{unsrt}

\begin{thebibliography}{10}
	
	\bibitem{AL:2023(AppliedNumMath)}
	J.~A.~D. Appleby and E.~Lawless.
	\newblock Mean square asymptotic stability characterisation of perturbed linear stochastic functional differential equations.
	\newblock {\em Applied Numerical Mathematics}, 2023.
	
	\bibitem{AL:2023(AppliedMathLetters)}
	J.~A.~D. Appleby and E.~Lawless.
	\newblock Solution space characterisation of perturbed linear volterra integrodifferential convolution equations: The ${L}^p$ case.
	\newblock {\em Applied Mathematics Letters}, 146:108825, 2023.
	
	\bibitem{AL:2024}
	J. A. D. Appleby and E. Lawless, Weighted L$^\infty$ Asymptotic Characterisation of Perturbed
	Autonomous Linear Ordinary and Stochastic Differential Equations: Part I - ODEs, 2024,
	arXiv, 40pp.
	
	\bibitem{AL:panto24}
	J. A. D. Appleby and E. Lawless, Characterisation of asymptotic behaviour of perturbed deterministic and stochastic pantograph equations, 2025,
	arXiv, 42pp.
	
	\bibitem{GLS}
	G.~Gripenberg, S.-O. Londen, and O.~Staffans.
	\newblock {\em Volterra Integral and Functional Equations}.
	\newblock Encyclopedia of Mathematics and it's Applications. Cambridge University Press, 1990.
	
	\bibitem{SY67a}
	A.~Strauss and J.~A. Yorke.
	\newblock Perturbation theorems for ordinary differential equations.
	\newblock {\em Journal of Differential Equations}, 3:15--30, 1967.
	
	\bibitem{SY67b}
	A.~Strauss and J.~A. Yorke.
	\newblock On asymptotically autonomous differential equations.
	\newblock {\em Mathematical Systems Theory}, 1:175--182, 1967.
\end{thebibliography}

\end{document}